\theoremstyle{plain}
\newtheorem{mainteo}{Theorem}
\newtheorem{teo}{Theorem}[section]
\newtheorem{cor}[teo]{Corollary}
\newtheorem{prop}[teo]{Proposition}
\newtheorem{lema}[teo]{Lemma}
\newtheorem*{assumption}{Assumption}
\newtheorem*{sublemma}{Sublemma}
\theoremstyle{definition}
\newtheorem{defi}[teo]{Definition}
\newtheorem{ejemplo}[teo]{Example}
\newtheorem*{notation}{Notation}
\theoremstyle{remark}
\newtheorem{nota}[teo]{Remark}
\numberwithin{equation}{section}
\newcommand\Ad{\operatorname{Ad}}
\newcommand\Aff{\operatorname{Aff}}
\newcommand\diag{\operatorname{diag}}
\newcommand\Exp{\operatorname{Exp}}
\newcommand\Lie{\operatorname{Lie}}
\newcommand\SO{\operatorname{SO}}
\newcommand\Spin{\operatorname{Spin}}
\newcommand\SU{\operatorname{SU}}
\newcommand\Tr{\operatorname{Tr}}
\newcommand\so{\mathfrak{so}}
\newcommand\tr{\mathfrak{tr}}
\renewcommand\gg{\mathfrak{g}}
\newcommand\gh{\mathfrak{h}}
\newcommand\gm{\mathfrak{m}}
\renewcommand\theenumi{\@roman\c@enumi}\makeatother
\title{The index of symmetry of compact naturally reductive spaces}
\author{Carlos Olmos}
\author{Silvio Reggiani}
\address{Facultad de Matem\'atica, Astronom\'ia y F\'isica,
  Universidad Nacional de C\'ordoba, Ciudad Universitaria, 5000
  C\'ordoba, Argentina}
\email{\href{mailto:olmos@famaf.unc.edu.ar}{olmos@famaf.unc.edu.ar},
  \href{mailto:reggiani@famaf.unc.edu.ar}{reggiani@famaf.unc.edu.ar}}
\author{Hiroshi Tamaru}
\address{Department of Mathematics, Hiroshima University, 1-3-1
  Kagamiyama, Higashi-Hi\-ro\-shi\-ma, 739-8526, Japan} 
\email{\href{mailto:tamaru@math.sci.hiroshima-u.ac.jp}{tamaru@math.sci.hiroshima-u.ac.jp}}
\thanks{2010 \emph{Mathematics Subject Classification}. Primary 53C30;
  Secondary 53C35.} 
\thanks{\emph{Key words and phrases}.  Index of symmetry, distribution
  of symmetry, naturally reductive space, symmetric space} 
\thanks{The work of C.~Olmos and S.~Reggiani was supported by
  Universidad Nacional de C\'ordoba and CONICET, and partially
  supported by ANCyT, Secyt-UNC and CIEM. H.~Tamaru was supported in
  part by KAKENHI (24654012)} 
\begin{document}

\begin {abstract}
We introduce a geometric invariant that we call the index of symmetry,
which measures how far is a Riemannian manifold from being a symmetric
space. We compute, in a geometric way, the index of symmetry of
compact naturally reductive spaces. In this case, the so-called leaf
of symmetry turns out to be of the group type. We also study several
examples where the leaf of symmetry is not of the group
type. Interesting examples arise from the unit tangent bundle of the
sphere of curvature $2$, and two metrics in an Aloff-Wallach
$7$-manifold and the Wallach $24$-manifold. 
\end {abstract}

\maketitle

\section{Introduction}

The study of Riemannian homogeneous spaces is a very important
research area inside Riemannian geometry.  The symmetric spaces,
defined and classified by \'E.~Cartan \cite{cartan-1926-1927}, are a
distinguished family among all homogeneous spaces.  The symmetric
spaces can be defined in several ways.  For example, these spaces 
are locally characterized by the property of having parallel curvature
tensor, or globally, by the fact that the geodesic symmetry at any
point extends to a global isometry.  Another way of defining a
symmetric space $M$ is the following.  Given $p \in M$ and $v \in
T_pM$ there exists a Killing field $X$ on $M$ such that $X(p) = v$ and
$(\nabla X)_p = 0$.  That is, there is a Killing field, in any
direction, which is parallel at $p$. 

The symmetric spaces generalize, in a natural way, to larger families
of homogeneous spaces.  This is the case of naturally reductive
spaces.  Recall that in a naturally reductive space $M = G/H$ there
always exists a canonical connection $\nabla^c$, which is
$G$-invariant and has the same geodesics as the Levi-Civita
connection (and hence, $\nabla^c$ has totally skew-symmetric torsion).
In particular, the Riemannian curvature tensor is parallel with
respect to the canonical connection, $\nabla^c R = 0$.  For a
symmetric space, the Levi-Civita connection is a canonical
connection.  

In recent years, some relevant results on naturally reductive spaces
have been obtained in the framework of connections with skew-symmetric
torsion.  In particular, it is proved that the canonical connection of
a naturally reductive space is essentially unique \cite{OR, OR2}
(except for certain well-studied cases, which are all symmetric). 

Notice that the family of naturally reductive spaces contains the
compact isotropy irreducible spaces and, more generally, the normal
homogeneous spaces. 

In this article we deal with a geometric invariant $0 \le i_{\mathfrak
  s}(M) \le \dim M$, that we call the \emph{index of symmetry} of the
Riemannian manifold $M$.  Roughly speaking, the index of symmetry of
$M$ measures how far is $M$ from being a symmetric space, in the sense
that $M$ is symmetric if and only if $i_{\mathfrak s}(M) = \dim M$.
The index of symmetry of $M$ can be defined as follows: it is the
greatest non-negative integer $k$ such that at every $p \in M$ there exist
at least $k$ linear independent vectors $v_1, \ldots, v_k \in T_pM$
and $k$ Killing fields $X_1, \ldots, X_k$ on $M$ such that $X_i(p) =
v_i$ and $(\nabla X_i)_p = 0$. 

If $i_{\mathfrak s}(M) = k$, given $p \in M$, one can consider the
subspace 
$$\mathfrak s_p = \{X(p): \text{$X$ is a Killing field on $M$ with
  $(\nabla X)_p = 0$}\} \subset T_pM,$$ 
which is a subspace of dimension at least $k$.  So, $p \mapsto
\mathfrak s_p$ defines an, a priori non-smooth nor even of constant
dimension, distribution $\mathfrak s$ on $M$.  The distribution
$\mathfrak s$ is called the \emph{distribution of symmetry} of $M$. 

If $M$ is a Riemannian homogeneous space, we have that the
distribution of symmetry has constant dimension and it is smooth
(since it is invariant under the full isometry group of $M$).  Hence
$i_{\mathfrak s}(M) = \dim \mathfrak s_p$ for any $p \in M$.  We prove
that $\mathfrak s$ is an integrable distribution with totally geodesic
leaves (or equivalently, $\mathfrak s$ is autoparallel).  Moreover,
the leaves of $\mathfrak s$ turn out to be intrinsically globally
symmetric submanifolds of $M$. 

The main goal of this article is to compute the index of symmetry of
compact normal homogeneous spaces.  More generally, we explicitly
determine the distribution of symmetry for these spaces. Namely, 

\begin{mainteo}\label{main1} 
Let $M = G/H$ be a simply connected compact  normal homogeneous space,
where $G$ is connected.  Let us assume that $M$ is an irreducible
Riemannian manifold which is not a symmetric space.  Then the
distribution of symmetry of $M$ coincides with the $G$-invariant
distribution defined by the fixed vectors of $H$.  
\end{mainteo}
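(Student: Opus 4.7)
The distribution of symmetry $\mathfrak{s}$ is invariant under the full isometry group, hence under $G$, so $\mathfrak{s}_o\subset T_oM\cong\mathfrak{m}$ is an $H$-invariant subspace and we must establish the two inclusions $\mathfrak{m}^H\subseteq\mathfrak{s}_o$ and $\mathfrak{s}_o\subseteq\mathfrak{m}^H$. The computational engine is the standard formula for the covariant derivative, at the origin, of a fundamental Killing field $X^{*}$ ($X\in\mathfrak{g}$) on a naturally reductive homogeneous space:
\[
(\nabla_w X^{*})_o \;=\; [X_{\mathfrak{h}},w]\;+\;\tfrac{1}{2}[X_{\mathfrak{m}},w]_{\mathfrak{m}},\qquad w\in\mathfrak{m}.
\]

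For the inclusion $\mathfrak{m}^H\subseteq\mathfrak{s}_o$, fix $v\in\mathfrak{m}^H$. Since $[v,\mathfrak{h}]=0$, the one-parameter subgroup $\exp(tv)\subset G$ centralises $H^{0}$, so (assuming $H$ connected, as we may) right translation by $\exp(tv)$ descends to a well-defined isometry of $M=G/H$; write $v^{R}$ for the associated Killing field, and $v^{L}$ for the usual left fundamental field of $v\in\mathfrak{g}$. Put $\phi_v(\cdot):=[v,\cdot\,]_{\mathfrak{m}}\in\mathfrak{so}(\mathfrak{m})$. The displayed formula gives $(\nabla v^{L})_o=\tfrac{1}{2}\phi_v$. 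To compute $(\nabla v^{R})_o$, note that $v^{L}-v^{R}$ is the fundamental field of the conjugation action $g\mapsto\exp(tv)\,g\,\exp(-tv)$, which fixes $o$; the relation $[v,\mathfrak{h}]=0$ implies $\pi_{\mathfrak{m}}(e^{t\ad v}w)=e^{t\phi_v}w$ for $w\in\mathfrak{m}$, hence $(\nabla(v^{L}-v^{R}))_o=\phi_v$, giving $(\nabla v^{R})_o=-\tfrac{1}{2}\phi_v$. The symmetric combination $K:=\tfrac{1}{2}(v^{L}+v^{R})$ is therefore a Killing field with $K(o)=v$ and $(\nabla K)_o=0$, so $v\in\mathfrak{s}_o$.

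The reverse inclusion $\mathfrak{s}_o\subseteq\mathfrak{m}^H$ requires controlling \emph{all} Killing fields of $M$, not only those arising from $\mathfrak{g}$. The plan is to invoke the essential uniqueness of the naturally reductive canonical connection from \cite{OR,OR2} to show that, under our irreducible non-symmetric hypotheses, every Killing field on $M$ has the form $K=X^{L}+Y^{R}$ with $X\in\mathfrak{g}$ and $Y\in\mathfrak{m}^H$. Plugging this into $(\nabla K)_o=0$ via the formulas above yields the constraint $\ad(X_{\mathfrak{h}})|_{\mathfrak{m}}=-\tfrac{1}{2}\phi_{X_{\mathfrak{m}}-Y}$, so that $K(o)=X_{\mathfrak{m}}+Y$ lies in $\mathfrak{m}^H+\mathfrak{m}_0$, where $\mathfrak{m}_0:=\{u\in\mathfrak{m}:\phi_u\in\ad(\mathfrak{h})|_{\mathfrak{m}}\}$. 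The proof is completed by showing $\mathfrak{m}_0\subseteq\mathfrak{m}^H$. I expect this to be the principal obstacle: an element of $\mathfrak{m}_0\setminus\mathfrak{m}^H$ would give rise to a proper non-trivial autoparallel $H$-invariant distribution with totally geodesic, intrinsically symmetric leaves of non-group type, contradicting either irreducibility or non-symmetry of $M$ via the geometric properties of $\mathfrak{s}$ established earlier in the paper.
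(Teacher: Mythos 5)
Your first inclusion ($\mathfrak{m}^H\subseteq\mathfrak{s}_o$) is correct: the field $\tfrac12(v^L+v^R)$ is exactly the kind of transvection the paper produces implicitly via Lemma \ref{factor} (the $G$-invariant fields $v^R$ are Killing, and $(\nabla v^R)_o=D_{\cdot}v=-\tfrac12\phi_v$ while $(\nabla v^L)_o=\tfrac12\phi_v$). Your reduction of the reverse inclusion is also sound as far as it goes: Theorem \ref{isometry group} does give $I_0(M)=G_{\mathrm{ss}}\times F$, so every Killing field is $X^L+Y^R$ with $Y\in\mathfrak{m}^H$, and the condition $(\nabla K)_o=0$ does translate into $X_{\mathfrak m}-Y\in\mathfrak{m}_0:=\{u:\phi_u\in\ad(\mathfrak h)|_{\mathfrak m}\}$, hence $\mathfrak{s}_o=\mathfrak{m}^H+\mathfrak{m}_0$.

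The genuine gap is the last step, $\mathfrak{m}_0\subseteq\mathfrak{m}^H$: this is not a loose end but is precisely equivalent to the theorem (an element $u\in\mathfrak{m}_0$ with $\ad(Z)|_{\mathfrak m}=-\tfrac12\phi_u$, $Z\in\mathfrak h$, gives the transvection $(u+Z)^L$ at $o$), and your proposed contradiction does not work as described. First, the leaves you would obtain are \emph{not} of non-group type: the paper proves (Proposition \ref{Gfactor}) that each de Rham factor $L_i(q)$ of the leaf of symmetry perpendicular to $\Sigma(q)$ is necessarily of group type, $G_i^q=K\times K$, with one factor $\hat G_i$ a normal subgroup of $G$; so no contradiction with ``group type'' is available. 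Second, the distribution $\mathfrak{s}_i$ tangent to these leaves is autoparallel but not parallel, so irreducibility is not contradicted merely by its existence. The actual mechanism in the paper is quantitative: after showing $\hat{\mathfrak g}_i$ is an ideal of $\mathfrak g$ and normalizing the bi-invariant metric so that $\langle\cdot,\cdot\rangle|_{\mathfrak g_i^q}=B\oplus\lambda B$ with $\lambda>0$, one compares the Killing field $Y\in\mathfrak m_1\subset\mathfrak m$ (for which $\langle[\xi,Y],\eta\rangle_q=-\langle Y\cdot q,[\xi,\eta]\cdot q\rangle$ by normality) with the transvection $X\in\mathfrak p_i^q$ (for which Proposition \ref{bracket formula} gives the coefficient $-\tfrac12$), obtaining $1=\tfrac{1}{1+\lambda}$ --- impossible --- unless $[\mathfrak m_2,\mathfrak m_2]_{\mathfrak m}\perp\mathfrak m_1$, in which case $(\mathfrak s_i)^{\bot}$ is also autoparallel and $M$ splits. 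Your proposal contains none of this: no use of the normality of the metric beyond skew-symmetry of $\ad$, no identification of the ideal $\hat{\mathfrak g}_i$, and no version of the bracket formula. As written, the argument proves only the easy inclusion and restates the hard one.
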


The above theorem is still true for compact naturally reductive spaces
under the hypothesis that the transitive presentation group $G$ is the
transvection group of the canonical connection. Namely, 

\begin{mainteo}\label{main2} 
Let $M = G/H$ be a simply connected compact naturally reductive space,
where $G$ is the group of transvections of the canonical connection.
Let us assume that $M$ is an  irreducible Riemannian manifold which is
not a symmetric space.  Then the distribution of symmetry of $M$
coincides with the $G$-invariant distribution defined by the fixed
vectors of $H$. 
\end {mainteo}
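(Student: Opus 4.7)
The plan is to compute the distribution of symmetry fiberwise at the origin $o=eH$ and prove that $\mathfrak{s}_o = \gm^H$, the subspace of $\Ad(H)$-fixed vectors in the reductive complement $\gm$ of $\gh$ in $\gg$. Since $\mathfrak{s}$ is invariant under the full isometry group it is a fortiori $G$-invariant, so this fiberwise identification suffices; moreover $\mathfrak{s}_o$ is automatically $\Ad(H)$-invariant, because pushing a Killing field that is parallel at $o$ by an isotropy element yields another Killing field with the same property.

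For the inclusion $\gm^H \subseteq \mathfrak{s}_o$, I would begin from the standard formula on a naturally reductive space for the covariant derivative at $o$ of the Killing field $Z^*$ induced by $Z = Z_\gh + Z_\gm \in \gg$:
\[
(\nabla_V Z^*)_o \;=\; -[Z_\gh, V] \;-\; \tfrac{1}{2}[Z_\gm, V]_\gm, \qquad V \in \gm,
\]
which follows from the vanishing of the Kostant symmetric term $U$. The tentative choice $Z = v$ for $v \in \gm^H$ produces $(\nabla_V v^*)_o = -\tfrac{1}{2}[v, V]_\gm$, which in general does not vanish, since $v \in \gm^H$ only yields $[v,\gh]=0$, not $[v,\gm] \subseteq \gh$. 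Thus $v^*$ alone is insufficient, and one has to produce a correcting Killing field from the full isometry algebra. Since $v \in \gm^H$, the one-parameter group $\exp(\bbr v)$ centralizes $H$, so the orbit $Z_G(H)^0 \cdot o$ has tangent space $\gm^H$ at $o$. Using the previously established fact that the leaves of $\mathfrak{s}$ are intrinsically symmetric, totally geodesic submanifolds, I would identify this orbit with the leaf of symmetry through $o$ and extract from its intrinsic symmetric structure the transvection Killing field whose value at $o$ is $v$ and whose covariant derivative vanishes there.

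For the reverse inclusion $\mathfrak{s}_o \subseteq \gm^H$, which is the heart of the result, I would argue by contradiction. Suppose $\mathfrak{s}_o$ contains some $v \notin \gm^H$. Since $\mathfrak{s}_o$ is $\Ad(H)$-invariant, it then contains a non-trivial $H$-irreducible summand $V$ with $V \cap \gm^H = 0$. By $G$-invariance, $V$ generates a $G$-invariant autoparallel subdistribution of $\mathfrak{s}$ whose leaves are totally geodesic symmetric submanifolds of $M$. The algebraic input of the transvection hypothesis, namely $\gh = [\gm, \gm]_\gh$, combined with the naturally reductive bracket identities and the parallelism $\nabla^c R = 0$, should let me turn the existence of such a summand into a bracket-compatible $\Ad(H)$-invariant splitting of $\gm$, equivalent to a local de Rham decomposition of $M$. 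Irreducibility then forces $V = 0$ (contradiction) or $V = \gm$, and the latter is excluded because $M$ is not symmetric. An appealing alternative is to reduce Theorem~\ref{main2} to Theorem~\ref{main1}, presenting $M$ as a normal homogeneous space of a suitably enlarged group, after verifying that the enlargement preserves the distribution of symmetry.

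The main obstacle is precisely this exclusion of non-$H$-fixed directions from $\mathfrak{s}_o$. The subtlety is twofold: the relevant Killing fields need not come from $\gg$, since the full isometry algebra may be strictly larger, so purely reductive-decomposition arguments are insufficient; and autoparallel $G$-invariant distributions with totally geodesic leaves are not automatically parallel distributions of $M$, so the de Rham theorem is not immediately available. Bridging these two gaps, by systematically exploiting the hypothesis that $G$ is the transvection group of $\nabla^c$, the compactness of $M$, and the intrinsic symmetric-space structure of the leaves established earlier, is where the core work of the proof should lie.
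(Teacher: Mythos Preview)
Your argument for the inclusion $\mathfrak{m}^H \subseteq \mathfrak{s}_o$ is circular: ``identifying the orbit $Z_G(H)^0 \cdot o$ with the leaf of symmetry through $o$'' is the content of the theorem. Even if you instead appeal only to the intrinsic symmetric structure of $\Sigma(o)$ (it is a compact Lie group with bi-invariant metric, hence intrinsically symmetric), its intrinsic transvections are Killing fields of $\Sigma(o)$, and you must still show they extend to Killing fields of $M$ whose covariant derivative at $o$ vanishes in the normal directions as well; totally geodesic does not give you that for free. The paper gets this inclusion from the explicit description $I_0(M) = G_{\mathrm{ss}} \times F$ of the full isometry group (valid in the naturally reductive case too): both the $G$-action and the $F$-action by $G$-invariant fields are available on $M$, and together they yield that $\Sigma(q)$ sits inside $L(q)$ as a local de~Rham factor.

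For the hard inclusion, your plan (an $H$-irreducible piece of $\mathfrak{s}_o$ outside $\mathfrak{m}^H$ forces a de~Rham splitting of $M$) is not the paper's mechanism, and you correctly flag it as the unresolved obstacle. The missing idea is Kostant's $\Ad(G)$-invariant nondegenerate symmetric bilinear form $Q$ on $\mathfrak{g}$, with $Q|_{\mathfrak{m}} = \langle\cdot,\cdot\rangle$ and $Q(\mathfrak{h},\mathfrak{m}) = 0$, whose existence is exactly what the transvection hypothesis $\mathfrak{g} = \mathfrak{m} + [\mathfrak{m},\mathfrak{m}]$ buys. The paper does not split $M$; it takes the de~Rham decomposition of the leaf $L(q)$, shows (using $I_0(M) = G_{\mathrm{ss}} \times F$) that any factor $L_i(q)$ perpendicular to $\Sigma(q)$ has its transvection algebra $\mathfrak{g}_i^q$ inside $\mathfrak{g}$, and that such a factor must be of group type, $\mathfrak{g}_i^q \simeq \hat{\mathfrak{g}}_i \oplus \hat{\mathfrak{g}}_i$ with $\hat{\mathfrak{g}}_i$ a simple ideal of $\mathfrak{g}$. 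Then $Q|_{\mathfrak{g}_i^q} = \lambda B \oplus \mu B$ for nonzero scalars, and comparing the bracket formula $\langle[\xi,X],\eta\rangle_q = -\tfrac{1}{2}\langle X,[\xi,\eta]\rangle_q$ (valid for a transvection $X$ at $q$) with the $Q$-invariance identity $\langle[\xi,Y],\eta\rangle_q = -\langle Y,[\xi,\eta]\rangle_q$ (valid for $Y \in \mathfrak{m}$) forces an impossible numerical relation among $\lambda,\mu$. Your alternative of enlarging the group to reduce to Theorem~\ref{main1} is close in spirit, but in general no such enlargement with a positive definite bi-invariant form exists; the paper keeps $G$ and instead allows the bilinear form to be indefinite.
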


In order to prove Theorem \ref{main1}, we deal with the de Rham
decomposition of the leaf of symmetry $L(p)$ of $\mathfrak{s}$ at an
arbitrary point $p$.  We first prove that the (connected component by
$p$) of the fixed points of $H$ in $M$, $\Sigma $, must be contained
in $L(p)$.  Moreover, we show that the flat factor of $L(p)$ by $p$
must be inside $\Sigma $.  Then, we finally prove that the semisimple
factors of $L(p)$ must be tangent to $\Sigma$.  The proof depends on
general arguments and uses strongly Proposition \ref{bracket formula}
and the precise knowledge of the full isometry group (see \cite{R, OR,
  OR2}). 

For proving Theorem \ref{main2} we make use of the so-called Kostant
bilinear form, which allows us to think of a naturally reductive space
as a normal homogeneous space with respect to a bi-invariant
pseudo-Riemannian metric defined on the group of presentation.  In
this case, the presentation group must be the group of transvection of
the canonical connection.  Similar arguments, as for the normal
homogeneous case, work.  (Though it is not trivial to adapt some of
the arguments.)

Recall, as it follows from Theorems \ref{main1} and \ref{main2}, that
for naturally reductive spaces the leaf of symmetry is of the group
type.  We finish the article by giving several examples of compact
homogeneous spaces with non-trivial index of symmetry and such that the
leaf of symmetry is not of the group type.  To do this, we work with
triples $G \supset G' \supset K'$, where $G/G'$ and
$G'/K'$ are symmetric spaces.  The examples arise by perturbing the
normal homogeneous metric on $G/K'$.  Interesting examples, 
obtained in this way, are the unit tangent bundle of the sphere of
curvature $2$, and two metrics that occur in the Aloff-Wallach
manifold $W^7_{1, -1} = \SU(3)/\SO(2)$ and the Wallach manifold
$W^{24} = F_4/\Spin(8)$. 
 
This article can be regarded as an effort to understand, in a
geometric way, naturally reductive spaces.  This is in the same spirit
of the articles \cite {OR, R, OR2, R2}.

\section{Preliminaries and basic facts}
\label{preliminaries}

\subsection{Infinitesimal transvections}

Let $M$ be a Riemannian manifold.  Throughout this article we will
call an \emph{infinitesimal transvection}, or just a \emph{(geometric)
  transvection}, at $p \in M$, to a Killing vector field $X$ such that
$(\nabla X)_q  =0$, where $\nabla$ is the Levi-Civita connection.
Recall that $M^n$ a symmetric space if and only if for all $p \in M$
there exist transvections $X_1, \ldots, X_n$, at $p$, such that
$X_1(p), \ldots, X_n(p)$ is a basis of $T_pM$.

Do not confuse the infinitesimal transvection with the transvection of
the canonical connection of a naturally reductive space.  In fact, let
$M = G/H$ be a naturally reductive space with associated reductive
decomposition $\gg = \gh \oplus \gm$.  That is, $M$ carries a
$G$-invariant metric, $\gg$ is the Lie algebra of $G$, $\gh$ is the
Lie algebra of $H$ and $\gm$ is an $\Ad(H)$-invariant subspace such
that the geodesics through $p = eH$ are given by $\Exp(tX) \cdot p$,
$X \in \gm$. Let $\nabla^c$ be the canonical connection of $M$ (i.e.,
the $G$-invariant connection associated with the decomposition $\gg =
\gh \oplus \gm$).  The Lie algebra of transvections of $\nabla^c$ is
$\tr(M, \nabla^c) = \gm + [\gm, \gm]$, and the connected associated Lie
subgroup is $\Tr(M, \nabla^c)$. Recall that $\Tr(M, \nabla^c)$ is a
transitive normal subgroup of $G$.  When the metric on $M$ is also
normal homogeneous, then $\Tr(M, \nabla^c)$ coincides with connected
component of $G$ (see \cite{R}).

Of course, a transvection of the canonical connections needs not to be
a geometric transvection.

\subsection{Ad-invariant bilinear forms}

In this subsection we want to point out the following elementary
remark, which will be very useful in the sequel.

\begin{nota}\label{bilinear}
Let $\mathfrak g$  be a Lie algebra and  $Q$ an $\Ad$-invariant
symmetric bilinear form on $\mathfrak g$.  Assume that $\mathfrak g$
is the sum of the ideals $\mathfrak g = \mathfrak g_1 \oplus \mathfrak
g_2$ where $\mathfrak g_1$ is semisimple.  Then, such a decomposition
must be orthogonal with respect to $Q$, that is $Q(\mathfrak g_1,
\mathfrak g_2) = 0$.  Moreover, if $\mathfrak g_1$ is simple, then the
restriction $Q|_{\mathfrak g_1}$ of $Q$ to the ideal $\mathfrak g_1$
must be a scalar multiple of the Killing form of $\mathfrak g_1$. 

In fact, let $X', X'' \in \mathfrak g_1$ and $Y \in \mathfrak g_2$.  If
$X = [X', X'']$, an standard calculation gives 
$$Q(X, Y) = Q([X', X''], Y) = -Q(X'', [X', Y]) = -Q(X'', 0) = 0.$$ 
Since $\mathfrak g_1$ is semisimple, it is linearly spanned by
elements of the form $X = [X', X'']$, and therefore $Q(\mathfrak g_1,
\mathfrak g_2) = 0$. 

If we also assume that $\mathfrak g_1$ is simple, it is well known
that Schur's Lemma implies that $Q|_{\mathfrak g_1}$ is a multiple of
the Killing form of $\mathfrak g_1$. 
\end{nota}

\subsection{The isometry group of naturally reductive spaces}

Let $M = G/H$ be a compact and locally irreducible naturally reductive
space.  Let $\nabla^c$ be the canonical connection on $M$.  In
\cite{OR} it is proved that the connected component of the group
$\Aff_0(M, \nabla^c)$ of $\nabla^c$-affine transformations coincides
with the connected component of the full isometry group of $M$, except
for spheres or real projective spaces.  On the other hand, in \cite{R}
it is studied the structure of $\Aff_0(M, \nabla^c)$ when $M$ is also
normal homogeneous.  This gives the following characterization of the
(connected component of the) isometry group of a normal homogeneous
space. 

\begin{teo}[see \cite{R}]\label{isometry group}
Let $M = G/H$ be a compact normal homogeneous space.  Assume that $M$
is locally irreducible and that it is neither (globally) isometric to a
sphere, nor to a real projective space.  Write $G = G_\mathrm{ab}
\times G_\mathrm{ss}$ as an almost direct product, where
$G_\mathrm{ab}$ is abelian and $G_\mathrm{ss}$ is a semisimple Lie
group of the compact type.  Then
$$I_0(M) = G_\mathrm{ss} \times F \qquad \text{(almost direct
  product)},$$
where $F$ is the connected component by $p = eH$ of the fixed points
of $H$ (regarded as a Lie group).
\end{teo}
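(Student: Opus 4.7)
The plan is to reduce the problem to a description of the affine group of the canonical connection, and then to split the resulting Lie algebra into the semisimple piece coming from $G_{\mathrm{ss}}$ and a ``centralizing'' piece $\mathfrak{f} = \Lie(F)$. By the result of \cite{OR} recalled above, under our hypotheses (compact, locally irreducible, neither a sphere nor a real projective space) we have $I_0(M) = \Aff_0(M, \nabla^c)$, so it suffices to give a structural description of the infinitesimal affine group of $\nabla^c$. As a preliminary observation, since $\gg_{\mathrm{ab}}$ is an abelian ideal of $\gg$ and $\ad(H_0)$ preserves the splitting $\gg = \gg_{\mathrm{ab}} \oplus \gg_{\mathrm{ss}}$, the image of $\ad(\gh)$ on $\gg_{\mathrm{ab}}$ lies in the derived algebra of $\gg_{\mathrm{ab}}$ and hence vanishes. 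Therefore $G_{\mathrm{ab}} \subset Z_G(H_0)$, the $G_{\mathrm{ab}}$-orbit of $p$ lies in the $H$-fixed set, and so $G_{\mathrm{ab}} \subset F$ already at the level of the presentation group.

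Next I would exhibit the decomposition $\aff(M, \nabla^c) = \gg_{\mathrm{ss}} + \mathfrak{f}$. Every infinitesimal affine transformation $X$ of $\nabla^c$ is determined by the pair $(X(p), (\nabla^c X)_p)$, with $(\nabla^c X)_p$ forced to commute with the holonomy algebra of $\nabla^c$; in the normal homogeneous setting this holonomy algebra equals $\ad(\gh)|_\gm$. I would split $X$ into a ``transvection'' piece whose value at $p$ is $X(p) \in \gm$ (extending to an element of $\gg$) plus a $\nabla^c$-parallel residue. The value of the residue at $p$ must be $H_0$-fixed, hence lies in $\gm^{H_0} = T_p F$, and its flow --- commuting with left $H$-translations --- integrates to a one-parameter subgroup of $F$. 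Conversely, both $\gg_{\mathrm{ss}}$ and $\mathfrak{f}$ act on $M$ by isometries and therefore preserve $\nabla^c$. The decomposition $I_0(M) = G_{\mathrm{ss}} \cdot F$ is then almost direct because any $\Ad$-invariant bilinear form makes $\gg_{\mathrm{ss}}$ orthogonal to $\mathfrak{f}$ by Remark \ref{bilinear}, forcing $\gg_{\mathrm{ss}} \cap \mathfrak{f}$ to be central in $\gg_{\mathrm{ss}}$ and hence finite at the group level.

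The main obstacle is the exhaustion claim: proving that no affine field has $(\nabla^c X)_p$ outside the sum of $\ad(\gh)|_\gm$ and the holonomy-fixed complement. This is precisely where the hypotheses of local irreducibility and the exclusion of spheres and real projective spaces become essential, since in the excluded cases the holonomy of $\nabla^c$ is a strict subalgebra of $\so(\gm)$ which admits extra $\nabla^c$-parallel endomorphisms, producing additional affine (indeed isometric) fields not accounted for by $\mathfrak{f}$. A Schur-type analysis of the irreducible isotropy representation (in the spirit of Remark \ref{bilinear}), combined with the explicit description of the transvection algebra of the canonical connection on a normal homogeneous space, supplies the rigidity needed to rule out such extra fields and thereby completes the identification $I_0(M) = G_{\mathrm{ss}} \times F$.
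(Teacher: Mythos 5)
The paper does not actually prove Theorem \ref{isometry group}: it is quoted from \cite{R}, and the only proof content the paper supplies is the observation, in the paragraph following the statement, that the key ingredient is that $G$-invariant vector fields are Killing fields (because $\nabla X = \nabla X - \nabla^c X = DX$ with $D$ totally skew-symmetric). Your architecture --- reduce to $\Aff_0(M,\nabla^c)$ via \cite{OR}, then split an affine field into a transvection plus a $\nabla^c$-parallel residue identified with $\Lie(F)$ --- is indeed the architecture of \cite{R}. But the two load-bearing steps are missing. First, you assert that ``$\mathfrak{f}$ acts on $M$ by isometries.'' A $\nabla^c$-parallel (equivalently, $G$-invariant) field is an affine field of $\nabla^c$, but there is no a priori reason it is Killing; this is exactly the nontrivial point the paper singles out, and it requires the skew-symmetry of the difference tensor. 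Without it you do not even have the inclusion $F \subset I_0(M)$. Second, the exhaustion claim --- that the affine isotropy at $p$ contributes nothing beyond $\ad(\gh)|_{\gm}$ together with endomorphisms vanishing on the relevant complement, so that every affine field is (transvection) $+$ ($\nabla^c$-parallel field) --- is the hard theorem of \cite{OR}/\cite{R}. It rests on the skew-torsion holonomy theorem (a Simons-type theorem for holonomy systems with skew-symmetric torsion), not on ``a Schur-type analysis of the irreducible isotropy representation'': the isotropy representation of a normal homogeneous space need not be irreducible, and $(\nabla^c X)_p$ lies only in the \emph{normalizer} of the holonomy algebra, which must be controlled. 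As written, your final paragraph states what has to be proved rather than proving it.

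A smaller but genuine error: your argument that the product is almost direct runs Remark \ref{bilinear} backwards. That remark deduces orthogonality \emph{from} the hypothesis that the two summands are already ideals; orthogonality with respect to an $\Ad$-invariant form does not imply that $\gg_{\mathrm{ss}}$ and $\mathfrak{f}$ commute. The correct (and easier) argument is that a $G$-invariant field commutes with every Killing field induced by $\gg$ by the very definition of invariance, so $[\gg,\mathfrak{f}]=0$; an element of $\gg_{\mathrm{ss}}\cap\mathfrak{f}$ is then central in the semisimple algebra $\gg_{\mathrm{ss}}$, hence zero, giving discrete intersection at the group level. Your preliminary observation that $G_{\mathrm{ab}}\subset F$ is correct in substance, although the justification should be $[\gh,\gg_{\mathrm{ab}}]\subset\gg_{\mathrm{ab}}\cap[\gg,\gg]=\gg_{\mathrm{ab}}\cap\gg_{\mathrm{ss}}=\{0\}$ rather than an appeal to the derived algebra of $\gg_{\mathrm{ab}}$.
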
 

Recall that the Lie algebra of $F$ may be identified with the Lie
algebra of $G$-invariant fields on $M$. 

Actually, Theorem \ref{isometry group} remains true if we assume that
$M$ is naturally reductive.  In fact, the key factor to prove Theorem
\ref{isometry group} in \cite{R} is that $G$-invariant fields are
Killing fields.  This is also true if $M$ is naturally reductive.  In
fact, the difference tensor $D = \nabla - \nabla^c$, where $\nabla$ is
the Levi-Civita connection of $M$, is totally skew-symmetric.  If $X$
is a $G$-invariant field, then $\nabla^c X = 0$ (since the canonical
connection is $G$-invariant).  Then
$$\nabla X = \nabla X - \nabla^cX = DX$$
is skew-symmetric, and hence $X$ is a Killing field.  Therefore, the
proof of Theorem~\ref{isometry group} given in \cite{R} also works in
the naturally reductive case (see \cite{OR2}).

\section{The index of symmetry}  

Let $M^n$ be a Riemannian manifold and denote by $\mathfrak K(M)$ the
Lie algebra of global Killing fields on $M$.  For $q \in M$, let us
define the Cartan subspace $\mathfrak p^q$ at $q$ by 
$$\mathfrak p^q := \{ X \in \mathfrak K(M): (\nabla X)_q = 0\}.$$ 
The symmetric isotropy algebra at $q$ is defined by 
$$\mathfrak k^q := \{[X , Y]: X, Y \in \mathfrak p^q\}.$$ 
Observe that $\mathfrak k ^q$ is contained in the (full) isotropy
subalgebra $\mathfrak K_q(M)$.  In fact, if $X, Y \in \mathfrak p^q$,
then $[X, Y]_q = (\nabla_XY)_q - (\nabla_YX)_q = 0$.  Moreover, since
$\mathfrak p^q $ is left invariant by the isotropy at $q$, 
$$\mathfrak g^q := \mathfrak k^q \oplus \mathfrak p^q$$ 
is an involutive Lie algebra. 

\begin{nota}\label{transporte} 
If $X \in \mathfrak p^q$, then $ \gamma(t) = \Exp(tX) \cdot q$ is a
geodesic.  Moreover, the parallel transport along $\gamma(t)$ is given
by $dm_{\Exp(tX)}|_q$, where $m_g(x) = g \cdot x$.  In fact, for any
Killing field $X$,  
$$\tau_{-t} \circ dm_{\Exp(tX)}|_q  = e^{t(\nabla X)_q} \in
\so(T_qM),$$
where $\tau_t$ is the parallel transport along the curve $\Exp(tX)
\cdot q$ (see \cite[p.~163]{BCO}).
\end{nota}

Let $G^q$ be the Lie subgroup, of the full isometry group $I(M)$,
associated to the Lie algebra $\mathfrak g^q$.  By the previous
remark, the orbit $G^q \cdot q$ is a totally geodesic submanifold of $M$. 

Observe that $G^{g \cdot q} = g \cdot G^q \cdot g^{-1}$ and that $G^x = G^q$, for
all $x\in G^q \cdot q$.

The symmetric subspace at $q$, $\mathfrak s_q \subset T_qM$, is
defined by 
$$\mathfrak s_q := \{X \cdot q : X \in \mathfrak p^q\}  = \mathfrak
p^q \cdot q.$$

\begin{defi} The \textit{index of symmetry} $i_{\mathfrak s}(M)$ of a
  Riemannian manifold $M$ is the infimum, over $q \in M$, of the
  dimensions of $\mathfrak s_q$. 
\end{defi}

Observe that
$$L(q) := G^q \cdot q = \exp(\mathfrak s_q).$$
Moreover, for any $x \in G^q \cdot q$, $T_x(G^q \cdot q) = \mathfrak s_x$.
So, the totally geodesic submanifold $L(q)$ is a leaf of the a priori
non necessarily smooth (and eventually singular) distribution $p
\mapsto \mathfrak s_p$, $p\in M$.  Note that $L(q)$ is a locally
symmetric space, whose group of transvections is $G^q$, quotioned by
the ideal of elements that acts trivially on $L(q)$ (we will see later
that  $G^q$ acts almost effectively on $L(q)$ if $M$ is compact).
Moreover, $L(q)$ is a globally symmetric space, as follows from
Theorem A of \cite{EO} (see also Lemma 5 of this reference). 

\begin{lema}\label{effective} 
If $M$ is compact then $G^q$ acts almost effectively on $L(q) = G^q
\cdot q$, for all $q \in M$. 
\end{lema}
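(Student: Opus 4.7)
The plan is to identify the Lie algebra of the ineffective kernel with an ideal of $\mathfrak{g}^q$ that is forced to lie simultaneously in the center and in the derived subalgebra, and then to rule out any such nonzero element using the positive definite bi-invariant form supplied by compactness of $M$.

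First I would let $\mathfrak{n} \subset \mathfrak{g}^q$ be the Lie algebra of the kernel of $G^q \to \Iso(L(q))$, which is the largest ideal of $\mathfrak{g}^q$ contained in the isotropy subalgebra of $q$ in $\mathfrak{g}^q$. A short check shows that this isotropy subalgebra is exactly $\mathfrak{k}^q$: writing $X = K + P \in \mathfrak{g}^q$ with $K \in \mathfrak{k}^q$, $P \in \mathfrak{p}^q$, the condition $X_q = 0$ forces $P_q = 0$; but any $P \in \mathfrak{p}^q$ vanishing at $q$ has $P_q = 0$ and $(\nabla P)_q = 0$, so it vanishes identically, and therefore $X = K \in \mathfrak{k}^q$. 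I would then exploit the involutive bracket relations: $[\mathfrak{k}^q, \mathfrak{p}^q] \subset \mathfrak{p}^q$ together with the fact that $\mathfrak{n}$ is an ideal gives
\[
[\mathfrak{n}, \mathfrak{p}^q] \subset \mathfrak{n} \cap \mathfrak{p}^q \subset \mathfrak{k}^q \cap \mathfrak{p}^q = 0,
\]
and the Jacobi identity combined with $\mathfrak{k}^q = [\mathfrak{p}^q, \mathfrak{p}^q]$ then yields $[\mathfrak{n}, \mathfrak{k}^q] = 0$ as well, so $\mathfrak{n} \subset Z(\mathfrak{g}^q)$. On the other hand, $\mathfrak{n} \subset \mathfrak{k}^q = [\mathfrak{p}^q, \mathfrak{p}^q] \subset [\mathfrak{g}^q, \mathfrak{g}^q]$, so altogether $\mathfrak{n} \subset Z(\mathfrak{g}^q) \cap [\mathfrak{g}^q, \mathfrak{g}^q]$.

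Compactness of $M$ closes the argument: $I(M)$ is a compact Lie group, so $\iso(M)$ carries an $\Ad(I(M))$-invariant positive definite bilinear form $Q$, whose restriction to $\mathfrak{g}^q$ remains $\Ad(G^q)$-invariant and positive definite. The standard identity $Q(Z, [X, Y]) = Q([Y, Z], X)$ applied to a central $Z$ gives $Q(Z(\mathfrak{g}^q), [\mathfrak{g}^q, \mathfrak{g}^q]) = 0$, and positive definiteness then forces $Z(\mathfrak{g}^q) \cap [\mathfrak{g}^q, \mathfrak{g}^q] = 0$. Therefore $\mathfrak{n} = 0$, which is exactly almost effectiveness.

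The step that requires some insight rather than routine manipulation is recognizing that $\mathfrak{n}$ is forced into both $Z(\mathfrak{g}^q)$ and $[\mathfrak{g}^q, \mathfrak{g}^q]$. The second inclusion is the more delicate one and relies essentially on the derived-type definition $\mathfrak{k}^q = [\mathfrak{p}^q, \mathfrak{p}^q]$, rather than on $\mathfrak{k}^q$ being merely the full isotropy subalgebra. Once both inclusions are in hand, compactness of $M$ enters only to supply a bi-invariant inner product that separates center from commutator.
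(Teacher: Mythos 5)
Your proof is correct and follows essentially the same route as the paper's: compactness of $I(M)$ supplies an $\Ad$-invariant positive definite form, the ineffective kernel is shown to bracket trivially with $\mathfrak p^q$, and invariance of the form then makes the kernel orthogonal to $\mathfrak k^q=[\mathfrak p^q,\mathfrak p^q]$, which contains it. The only (harmless) divergence is that you obtain $[\mathfrak n,\mathfrak p^q]=0$ purely algebraically from the ideal property and $\mathfrak k^q\cap\mathfrak p^q=0$, whereas the paper gets it from the kernel acting trivially on $T_qL(q)$; your extra step showing $\mathfrak n$ is central is not needed but costs nothing.
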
 

\begin{proof}
Since $M$ is compact, the isometry group $I(M)$ is compact.  Let
$(\cdot, \cdot)$ be an $\Ad(I(M))$-invariant inner product on the Lie
algebra of $I(M)$. 

Let $\mathfrak h \subset \mathfrak k^q$ the ideal which corresponds to
the normal subgroup of $H$ of $G^q$ that acts trivially on $L(q)$.
Let $Z \in \mathfrak h$.  Then $[Z, \mathfrak p^q] \subset \mathfrak
p^q$, since the isotropy at $q$ leaves invariant $\mathfrak p^q$.  On
the other hand, since $H$ acts trivially on $T_q(L(q)) = \mathfrak
s_q$, one has that $[Z,X]_q =0$, for all $X \in \mathfrak p^q$.  Then
$[Z, \mathfrak p^q] = \{0\}$.  Therefore, if $X, Y \in \mathfrak p^q$,
$(Z, [X, Y]) = -([Z, X], Y) = 0$.  Thus, $Z$ is perpendicular to
$[\mathfrak p^q, \mathfrak p^q] = \mathfrak k^q$.  Hence $\mathfrak h
= \{0\}$. 
\end{proof}

We identify $T_q(G^q \cdot q) = \mathfrak s_q \simeq \mathfrak p^q$ and
decompose $\mathfrak p^q = \mathfrak p_0^q \oplus \mathfrak p_1^q
\oplus \cdots \oplus \mathfrak p_s^q$, where $\mathfrak p_0^q$
corresponds to the Euclidean factor and $\mathfrak p_i^q$, $i \ge 1$,
corresponds to the irreducible $i$-th factor in the de Rham
decomposition of $L(q) = G^q \cdot q$.  Let $\mathfrak k_j^q = [\mathfrak
  p_j^q, \mathfrak p_j^q]$ for $j = 0, \ldots, s$ and let $\mathfrak
g_j^q = \mathfrak k_j^q \oplus \mathfrak p_j^q$. 

\begin{cor}\label{ideals}
If $M$ is compact then $\mathfrak k_0^q = \{0\}$, $[\mathfrak g_i^q,
  \mathfrak g_j^q] = \{0\}$, if $i \neq j$ and so $\mathfrak g^q$ is
the direct sum of the ideals $\mathfrak g_1^q, \ldots, \mathfrak
g_s^q$.  Then 
$$G^q = G^q_0 \times \cdots \times G^q_s \qquad \text {(almost direct
  product)},$$ 
where $\Lie(G^q_i) = \mathfrak g_i^q$.
\end{cor}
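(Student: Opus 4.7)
The plan is to combine Lemma \ref{effective} with the symmetric-space structure of $L(q)$ and its de~Rham decomposition. Under the identification $\mathfrak p^q \simeq T_q L(q) = \mathfrak s_q$, the isotropy action of $\mathfrak k^q$ on $\mathfrak s_q$ is $Z \cdot X = [Z, X]$, and it is faithful by Lemma \ref{effective}; meanwhile, the Riemannian curvature of $L(q)$ at $q$ obeys the symmetric-space identity $R^{L(q)}(X, Y) Z = -[[X, Y], Z]$ for $X, Y, Z \in \mathfrak p^q$.

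For the first step, I would feed into this identity the vanishing of the mixed curvature coming from the de~Rham decomposition: $R^{L(q)}(X, Y) = 0$ whenever $X \in \mathfrak p_i^q$ and $Y \in \mathfrak p_j^q$ with $i \neq j$, and also $R^{L(q)}|_{\mathfrak p_0^q \times \mathfrak p_0^q} \equiv 0$ because $\mathfrak p_0^q$ is the flat factor. In each case the formula forces $[X, Y] \in \mathfrak k^q$ to annihilate all of $\mathfrak p^q$, so by faithfulness $[X, Y] = 0$. This yields
$$[\mathfrak p_i^q, \mathfrak p_j^q] = 0 \quad (i \neq j), \qquad \mathfrak k_0^q = [\mathfrak p_0^q, \mathfrak p_0^q] = 0.$$

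Next, I would propagate these vanishing brackets to all of $\mathfrak g_i^q$ using the Jacobi identity $[[X_1, X_2], Y] = [X_1, [X_2, Y]] - [X_2, [X_1, Y]]$: a first application with $X_1, X_2 \in \mathfrak p_i^q$ and $Y \in \mathfrak p_j^q$ gives $[\mathfrak k_i^q, \mathfrak p_j^q] = 0$, and a second application with $Y \in \mathfrak k_j^q$ then gives $[\mathfrak k_i^q, \mathfrak k_j^q] = 0$. Thus $\mathfrak g^q$ splits as the direct sum of the pairwise commuting ideals $\mathfrak g_0^q, \mathfrak g_1^q, \ldots, \mathfrak g_s^q$, and the corresponding connected subgroups assemble into the almost direct product $G^q = G_0^q \times \cdots \times G_s^q$. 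The only delicate point is the first step: one must know that the \emph{intrinsic} curvature of the totally geodesic submanifold $L(q)$, with respect to which the de~Rham decomposition is defined, is really the one computed by the bracket of the symmetric pair $(\mathfrak g^q, \mathfrak k^q)$. But this is exactly the content of the (almost effective) globally symmetric presentation of $L(q)$ already secured just before the statement of the corollary.
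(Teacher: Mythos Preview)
Your argument is correct and is precisely the standard unpacking the authors leave implicit (the paper states the result as a corollary without proof): the almost-effective action from Lemma~\ref{effective} makes the isotropy representation of $\mathfrak k^q$ on $\mathfrak p^q$ faithful, so the vanishing of mixed and flat curvature in the de~Rham decomposition forces $[\mathfrak p_i^q,\mathfrak p_j^q]=0$ for $i\neq j$ and $\mathfrak k_0^q=0$, after which Jacobi gives the rest.
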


In the notation of the above corollary, we denote
$$L_i(q) := G^q_i \cdot q, \qquad i= 1, \ldots, s,$$
and we call $L_i(q)$ the \textit{$i$-th de Rham factor} by $q$ of
$L(q)$. 

More generally, if  $J \subset \{0, \ldots, s\}$ we denote by 
$$G^q_J = \prod_{j \in J} G_j^q.$$ 
The orbit $L_J(q) := G_J^q \cdot q$ is called a \textit{local factor by
  $q$} of $L(q)$.

\medbreak

Let 
$$\tilde G^q = \{g \in I(M): g \cdot L(q) = L(q)\}_0.$$

Let $g \in \tilde G^q$, then $G^q = G^{g \cdot q} = gG^qg^{-1}$. So, $G^q$
is a normal subgroup of ~ $\tilde G^q$. 

Let
$$\bar H^q = \{g \in I(M): g \text { acts trivially on } L(q)\}_0.$$ 
Observe that $\bar H^q$ is also a normal subgroup of $\tilde G^q$.  

\begin{lema}\label{direct} 
$\tilde G^q = G^q \times \bar H ^q$ (almost direct product).
\end{lema}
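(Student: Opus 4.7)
The plan is to verify the almost direct product decomposition through two Lie algebra statements, given the normality of both $G^q$ and $\bar H^q$ in $\tilde G^q$ noted just before the lemma: (i) $\mathfrak g^q \cap \bar{\mathfrak h}^q = 0$, and (ii) $\tilde{\mathfrak g}^q = \mathfrak g^q + \bar{\mathfrak h}^q$. Normality together with discreteness of the intersection automatically forces $G^q$ and $\bar H^q$ to commute, so these two infinitesimal facts imply the conclusion.

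Part (i) is immediate from Lemma \ref{effective}: an element of $\mathfrak g^q \cap \bar{\mathfrak h}^q$ must lie in the ideal of $\mathfrak g^q$ acting trivially on $L(q)$, which is zero. For part (ii), I would consider the restriction homomorphism $\rho \colon \tilde G^q \to I(L(q))$, whose kernel has $\bar H^q$ as its identity component. Since $\tilde G^q$ is connected by definition, $\rho(\tilde G^q) \subset I_0(L(q))$, so it suffices to show $\rho(G^q) = I_0(L(q))$; once this is established, for any $Z \in \tilde{\mathfrak g}^q$ one finds $X \in \mathfrak g^q$ with $d\rho(Z) = d\rho(X)$, and then $Z - X \in \ker d\rho = \bar{\mathfrak h}^q$ gives the desired decomposition $Z = X + (Z-X)$.

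To establish $\rho(G^q) = I_0(L(q))$, I would use the de Rham decomposition from Corollary \ref{ideals}, $L(q) = L_0(q) \times L_1(q) \times \cdots \times L_s(q)$, and argue factor by factor. On each compact irreducible semisimple factor $L_i(q)$ the transvection group (the image of $G_i^q$) coincides with $I_0(L_i(q))$, which handles the $i \geq 1$ terms. The delicate case is the Euclidean factor $L_0(q)$: being a compact flat totally geodesic submanifold of the compact $M$, it is a Bieberbach-type manifold whose identity isometry component is a torus acting by translations, and these translations are realized precisely by the image of the abelian transvection algebra $\mathfrak p_0^q$.

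The main obstacle is therefore this Euclidean case, where $\mathfrak k_0^q = 0$ leaves no isotropy inside $\mathfrak g^q$ to absorb anything that $\rho(\tilde G^q)$ could produce beyond translations; one must rely on the compactness of $M$ (implicit from the use of Corollary \ref{ideals} and Lemma \ref{effective}) to force $L_0(q)$ to be compact flat, so that its identity isometry component is purely translational and no additional rotational contributions from Killing fields of the ambient $M$ can appear.
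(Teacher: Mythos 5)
Your reduction to the two Lie-algebra statements is the right skeleton, and part (i) together with the commutation argument is fine. The gap is in how you handle the Euclidean factor, which you correctly single out as the delicate point but then resolve with an unjustified claim: compactness of $M$ does \emph{not} force the flat leaf $L_0(q)$ to be compact. $L_0(q)$ is the orbit of the connected abelian group $G_0^q=\exp(\mathfrak p_0^q)$, which need not be closed in $I(M)$; its orbit can wind densely in a torus orbit (exactly as an irrational line in a flat torus), so $L_0(q)$ may be a non-closed immersed copy of $\mathbb R^j\times T^m$. The paper itself is careful about this: in Remark \ref{Fi} compactness of the leaves is asserted only for the factors $L_i$ with $i>0$ (those come from semisimple, hence closed, subgroups). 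If $L_0(q)$ is non-compact with $j\ge 2$, then $I_0(L_0(q))$ contains rotations, so your target identity $\rho(G^q)=I_0(L(q))$ is simply false in general, and your argument stalls precisely at the step you flagged.

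The fix --- and the paper's actual argument --- is to prove something weaker and more robust: you do not need $\rho(G^q)=I_0(L(q))$, only $d\rho(\tilde{\mathfrak g}^q)\subset d\rho(\mathfrak g^q)$. For $Z\in\tilde{\mathfrak g}^q$, the restriction $Z|_{L(q)}$ is an intrinsic Killing field of the globally symmetric space $L(q)$ which is \emph{bounded}, because $Z$ is a Killing field of the compact manifold $M$. A bounded Killing field on a symmetric space lies in the Lie algebra of its transvection group (on the semisimple compact factors every Killing field qualifies; on the flat factor $\mathbb R^j\times T^m$ boundedness kills all rotational parts and leaves only parallel fields, i.e.\ translations). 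Hence there is $Y\in\mathfrak g^q$ with $Y|_{L(q)}=Z|_{L(q)}$, and $Z-Y\in\bar{\mathfrak h}^q$, giving $\tilde{\mathfrak g}^q=\mathfrak g^q+\bar{\mathfrak h}^q$. This handles all de Rham factors uniformly and also spares you the appeal to the structural fact that the transvection group of a compact irreducible symmetric space is all of $I_0$.
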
 

\begin{proof}
By Lemma \ref{effective}, $G^q \cap \bar H ^q$ is discrete.  Let $X$
be a Killing field induced by $\tilde G^q$.  Then $X|_{L(q)}$ is an
intrinsic Killing field of $L(q)$ which is bounded.  Then it must lie
in the Lie algebra of the (intrinsic) transvection group of $L(q)$.
Then there exists a Killing field $Y \in \mathfrak g^q$ such that
$Y|_{L(q)} = X|_{L(q)}$.  Then $Z = Y - X$ is null when restricted to
$L(q)$.  Then $Z$ lies in the Lie algebra of $\bar H^q$.  This implies
the desired decomposition. 
\end{proof}

We denote, if $J \subset \{0, \ldots, s\}$, by 
\begin{equation}\label{tilde G_J}
\tilde G_J^q = \{g \in I(M): g \cdot L_J(q) = L_J(q)\}_0.
\end{equation}
In the same way as before, one has that $G_J^q$ is a normal subgroup
of $\tilde G_J^q$.  Let
$$\bar H_J^q = \{g \in I(M): g \text { acts trivially on }
L_J(q)\}_0.$$
One has that $\bar H_J^q$ is also a normal subgroup of $\tilde
G_J^q$.  Observe that $G_i^q$ acts trivially on $L_J(q)$, if  $i
\notin J$.  Then, by Lemma \ref{direct}, 
$$\bar H_J^q = \bar H^q \times \hat G_J ^q$$
where 
$$\hat G_J^q = \prod_{i\notin J} G_i^q.$$ 

One has also that
\begin{equation}\label{***}
\tilde G_J^q = G_J^q \times \bar H_J^q \qquad \text{(almost direct
  product)}.
\end{equation}

Observe that $I(M)_q \subset \tilde G_J^q$, since the full isotropy at
$q$ leaves invariant $L_J(q)$.  Then,  

\begin{equation}\label{isotropy decomposition}
\Lie(I(M)_q) = \mathfrak{k}_J^q \oplus \bar {\mathfrak{h}}_J^q \qquad
\text {(direct sum of ideals)}, 
\end {equation}
where $\bar{\mathfrak{h}}_J^q = \Lie(\bar H_J^q)$ and
$\mathfrak{k}_J^q = \bigoplus _{j\in J} \mathfrak{k}_j^q$.

\subsection{The bracket formula}

We recall the Koszul formula, which gives the Levi-Civita connection
in terms of the Riemannian metric tensor and the Lie bracket: 
\begin{align*}
2\langle\nabla_XY, Z\rangle & = X\langle Y, Z\rangle + Y\langle X,
Z\rangle - Z\langle X, Y\rangle \\  
& \qquad + \langle[X, Y], Z\rangle - \langle[X, Z], Y\rangle -
\langle[Y, Z], X\rangle.
\end{align*}

Assume now that $X, Y, Z$ are Killing fields.  Since the flow of a
Killing field preserves the metric tensor, then the Lie derivative of
the metric along this Killing field is zero.  Then 
$$X\langle Y, Z\rangle = \langle[X, Y], Z\rangle + \langle Y, [X,
  Z]\rangle$$ 
and the same if one permutes $X, Y, Z$.  By making use of these
relations in the Koszul formula one obtains the well known formula for
the Levi-Civita connection in terms of Killing fields: 
\begin{equation}\label{koszul}
2\langle\nabla_XY, Z\rangle = \langle[X, Y], Z\rangle + \langle[X, Z],
Y\rangle + \langle[Y, Z], X\rangle. 
\end{equation}

Assume now that $Y$ is parallel at $q$.  Then
$$\langle[X, Y], Z\rangle_q + \langle[X, Z], Y\rangle_q + \langle[Y,
  Z], X\rangle_q = 0.$$

\begin{prop}\label{bracket formula}
Let $M$ be a homogeneous Riemannian manifold and let $\mathcal L$ be its
foliation of symmetry (i.e., the elements of $\mathcal L$ are the
integral ma\-ni\-folds of the distribution of symmetry $\mathfrak s$).
Assume that the metric of $M$ projects down to $M/\mathcal L$ (only
locally, since  $M/\mathcal L$ could be not a manifold if the elements
of $\mathcal L$ are not closed submanifolds).  Let $q \in M$ and let
$X$ be a Killing field which is parallel at $q$ (observe that $X(q)
\in T_qL(q)$, where $L(q)$ is the element of $\mathcal L$ containing
$q$).  Let $\xi, \eta$ be Killing fields of $M$ such that their
restriction to $L(q)$ is always perpendicular to $L(q)$.  Then  
$$\langle[\xi, X ], \eta\rangle_q = -\frac{1}{2}\langle X, [\xi,
  \eta]\rangle_q.$$ 
\end{prop}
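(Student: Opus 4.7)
The plan is to use the Koszul formula (equation \eqref{koszul}) on the triple $(\xi,X,\eta)$ and then exploit the Riemannian-submersion hypothesis to eliminate one of the three resulting terms. Concretely, with $X,Y,Z$ replaced by $\xi,X,\eta$ in \eqref{koszul},
\begin{equation*}
2\langle\nabla_\xi X,\eta\rangle = \langle[\xi,X],\eta\rangle + \langle[\xi,\eta],X\rangle + \langle[X,\eta],\xi\rangle.
\end{equation*}
The left-hand side vanishes at $q$ because $X$ is parallel there, so we obtain
\begin{equation*}
\langle[\xi,X],\eta\rangle_q + \langle[X,\eta],\xi\rangle_q = -\langle X,[\xi,\eta]\rangle_q.
\end{equation*}
To conclude, I will show that the two bracket terms on the left are equal.

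First I would record that $X$ is tangent to $L(q)$ on all of $L(q)$: indeed, $X\in\mathfrak p^q\subset\mathfrak g^q$, so its flow preserves $L(q)=G^q\cdot q$. Next I would use the assumption that the metric descends to $M/\mathcal L$: since $\xi|_{L(q)}$ and $\eta|_{L(q)}$ are perpendicular to the leaf, they coincide on $L(q)$ with the horizontal lifts of well-defined vector fields on $M/\mathcal L$ (Killing fields preserve $\mathfrak s$, so both descend). The Riemannian-submersion property then forces $\langle\xi,\eta\rangle$ to be constant along $L(q)$. Since $X(q)\in\mathfrak s_q=T_qL(q)$, differentiating this identity at $q$ gives
\begin{equation*}
0 = X\langle\xi,\eta\rangle\big|_q = \langle\nabla_X\xi,\eta\rangle_q + \langle\xi,\nabla_X\eta\rangle_q.
\end{equation*}

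The rest is bookkeeping with $[U,X]=\nabla_UX-\nabla_XU$ and $(\nabla X)_q=0$. These yield
\begin{equation*}
\langle[\xi,X],\eta\rangle_q = -\langle\nabla_X\xi,\eta\rangle_q, \qquad \langle[X,\eta],\xi\rangle_q = \langle\nabla_X\eta,\xi\rangle_q,
\end{equation*}
and combining with the submersion identity gives $\langle[\xi,X],\eta\rangle_q = \langle[X,\eta],\xi\rangle_q$. Substituting back into the displayed consequence of Koszul turns the left-hand side into $2\langle[\xi,X],\eta\rangle_q$, which gives the desired
\begin{equation*}
\langle[\xi,X],\eta\rangle_q = -\tfrac{1}{2}\langle X,[\xi,\eta]\rangle_q.
\end{equation*}

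The only conceptually nontrivial step is the Riemannian-submersion argument that produces the symmetry $\langle\nabla_X\xi,\eta\rangle_q = -\langle\nabla_X\eta,\xi\rangle_q$; the rest is purely formal manipulation of the Koszul identity, so I expect no real obstacle once the submersion fact is invoked cleanly.
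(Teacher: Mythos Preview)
Your proof is correct and follows essentially the same route as the paper's: both start from the Koszul identity \eqref{koszul} evaluated at $q$ with $(\nabla X)_q=0$, then use the Riemannian-submersion hypothesis to obtain $X(q)\langle\xi,\eta\rangle=0$, convert the covariant derivatives to brackets via torsion-freeness and $(\nabla X)_q=0$, and deduce the symmetry $\langle[\xi,X],\eta\rangle_q=\langle[X,\eta],\xi\rangle_q$. The only cosmetic difference is that you spell out the horizontal-lift picture a bit more explicitly than the paper, which simply remarks that Killing fields are projectable.
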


\begin{proof} 
In the equality previous to the proposition, rename $Y$ by $X$, $X$ by
$\xi$ and $Z$ by $\eta$.  So,
\begin{equation}\label{*}
\langle[\xi, X], \eta\rangle_q + \langle[\xi, \eta ], X\rangle_q +
\langle[X, \eta], \xi\rangle_q = 0. 
\end{equation}

Now observe that 
\begin{equation}\label{**}
0 = X(q)\langle\xi, \eta\rangle = \langle\nabla_X\xi, \eta\rangle_q +
\langle\xi, \nabla_X\eta\rangle_q, 
\end{equation} 
since the metric projects down to the quotient (observe that the
Killing fields are projectable since their flow preserve the
foliation of symmetry). 

Observe that, since $\nabla$ is torsion free and $(\nabla X)_q = 0$,
then $(\nabla_X\xi)_q = [X, \xi]_q$ and $(\nabla_X\eta)_q = [X,
  \eta]_q$.  So, from equality \ref{**}, one obtains that $\langle[X,
  \xi], \eta\rangle_q + \langle\xi, [X, \eta]\rangle_q = 0$.  Thus,
$\langle[X, \eta], \xi\rangle_q = \langle[\xi, X], \eta\rangle_q$.
Therefore, by making use of this equality in \ref{*}, one gets the
desired formula. 
\end{proof}

\begin{nota}\label{Fi}
Proposition \ref{bracket formula} remains true is one replace
$\mathcal L$ by the foliation $\mathcal L_i$, whose leaves are
$L_i(x)$.  If $i > 0$, then the quotient is a manifold since the
leaves are compact and hence the orbit of a compact Lie group.  
\end{nota}

\section{Proof of Theorem \ref{main1}}

\begin{assumption}
In the following we will assume that $M = G/H$ is a compact normal
homogeneous Riemannian manifold, which is locally irreducible and not
locally symmetric ($G$ compact and connected). 
\end{assumption}

If the isotropy $H$, let us say at $q$, has fixed non-zero vectors on
$T_qM$, then the full (connected) isometry group $I_0(M)$ is in
general bigger than $G$.  If $G \subsetneq I_0(M)$, then the metric on
$M$ is not anymore normal homogeneous with respect to the presentation
$M = I_0(M)/(I_0(M))_q$.  Otherwise, the transvection group of the
canonical connection would be $I_0(M)$.  A contradiction (see Section
\ref{preliminaries}).
 
A transvection $X$ at $q$, i.e.\ a Killing field which is parallel at
$q$, may not lie in $\mathfrak g$.  In fact, as it will follow from our
main result, it will never lie in $\gg$ (unless the fixed vectors of
$H$ coincides with $T_q(G_0 \cdot q)$, where $G_0$ is the abelian part of
$G$). 

Since $M$ is homogeneous, the distribution $x \mapsto \mathfrak s_x$
is $G$-invariant and thus non-singular and $C^\infty$.  In particular,
$i_{\mathfrak s}(M)= \dim(\mathfrak s_q)$.  Observe that $\mathfrak
p^{g \cdot q} = \Ad(g)\mathfrak p^q $ and $\mathfrak k^{g \cdot q} =
\Ad(g)\mathfrak k^q$.

\begin{lema}\label{factor} 
Let $\Sigma(q)$ be the connected component of the fixed points of $H$
in $M$ (or, equivalently, $\Sigma(q)$ is the integral manifold by $q$
of the distribution $\mathcal D$ of fixed vectors of the isotropies).
Then $\Sigma(q)$ is a local factor of $L(q)$. 
\end{lema}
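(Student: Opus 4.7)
The plan is to prove the lemma in two stages: first establish the inclusion $\Sigma(q)\subset L(q)$ by exhibiting explicit geometric transvections at $q$ with arbitrary value in $\mathfrak m^H := T_q\Sigma(q)$, and then show that $\mathfrak m^H$ is compatible with the de Rham decomposition of $\mathfrak p^q = T_qL(q)$, so that $\Sigma(q) = L_J(q)$ for some $J\subset\{0,\ldots,s\}$.

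For the inclusion, fix $v\in\mathfrak m^H$ and consider two Killing fields on $M$: the fundamental vector field $v^{*}$ attached to $v\in\mathfrak m\subset\mathfrak g$, and the unique $G$-invariant Killing field $X_v\in\mathfrak f=\Lie(F)$ with $X_v(q)=v$, whose existence comes from Theorem~\ref{isometry group} identifying $\mathfrak f$ with the $H$-fixed vectors at $q$ via evaluation. The naturally reductive formula yields $(\nabla_{W^{*}}v^{*})(q)=-\tfrac12[W,v]_{\mathfrak m}$ for $W\in\mathfrak m$, while applying the Koszul identity \eqref{koszul} to the triple $(W^{*},X_v,Z^{*})$, together with $[X_v,Z]=0$ for all $Z\in\mathfrak g$ (a consequence of $F$ commuting with $G$), gives $(\nabla_{W^{*}}X_v)(q)=+\tfrac12[W,v]_{\mathfrak m}$. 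Consequently $X:=\tfrac12(v^{*}+X_v)$ is a Killing field with $X(q)=v$ and $(\nabla X)_q=0$, so $v\in\mathfrak s_q$. Varying $v$ and using that both $\Sigma(q)$ and $L(q)$ are totally geodesic through $q$, one deduces $\Sigma(q)\subset L(q)$.

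For the local-factor property, since $F$ is a normal subgroup of $I_0(M)=G_{\mathrm{ss}}\times F$, the distribution $\mathcal D$ — equivalently, the distribution $p\mapsto T_p(F\cdot p)$ of tangent spaces to $F$-orbits — is invariant under all of $I_0(M)$, and in particular under the connected transvection group $G^q\subset I_0(M)$. Passing to the isotropy at $q$, the symmetric isotropy $\mathfrak k^q$ preserves $\mathcal D_q=\mathfrak m^H$ as a subspace of $\mathfrak p^q$. Combining this with the decomposition $\mathfrak p^q=\mathfrak p_0^q\oplus\cdots\oplus\mathfrak p_s^q$, the fact that each $\mathfrak k_j^q$ for $j\ge1$ acts irreducibly and non-trivially on $\mathfrak p_j^q$ and trivially on the other factors, and complete reducibility of the $\mathfrak k^q$-action (valid since $I_0(M)$ is compact), one obtains a splitting $\mathfrak m^H=(\mathfrak m^H\cap\mathfrak p_0^q)\oplus\bigoplus_{j\in J'}\mathfrak p_j^q$ for some $J'\subset\{1,\ldots,s\}$.

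The main obstacle is then the flat factor: one must show $\mathfrak m^H\cap\mathfrak p_0^q\in\{0,\mathfrak p_0^q\}$. My plan is to establish the stronger statement that $\mathfrak p_0^q\subset\mathfrak f$, identifying elements of $\mathfrak p_0^q$ with $G$-invariant Killing fields on $M$. Indeed, any $X\in\mathfrak p_0^q$ is central in $\mathfrak g^q$, so the Lie subgroup $G_0^q$ it generates is abelian and, by compactness of $I_0(M)$, a torus centralizing $G^q$; using the product structure $I_0(M)=G_{\mathrm{ss}}\times F$ together with the irreducibility of $M$ (which prevents nonzero globally parallel Killing fields), this torus must lie entirely inside $F$, whence $X(q)\in\mathfrak m^H$. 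Granting this, $\mathfrak m^H=\mathfrak p_0^q\oplus\bigoplus_{j\in J'}\mathfrak p_j^q$ with $J=\{0\}\cup J'$, so $\Sigma(q)=L_J(q)$ is a local factor of $L(q)$, as claimed.
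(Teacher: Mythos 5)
Your first step is correct and is in fact more explicit than what the paper writes down: the transvection $\tfrac12(v^{*}+X_v)$, built from the fundamental field of $v\in\mathfrak m^H$ and the $G$-invariant field through $v$, does give $\mathcal D_q\subset\mathfrak s_q$, and the computation of the two covariant derivatives (opposite halves of $[W,v]_{\mathfrak m}$, using $\operatorname{ad}$-skewness of the normal metric and $[W^{*},X_v]=0$) is sound. Your second step is essentially the paper's: $\mathcal D$ is $I_0(M)=G_{\mathrm{ss}}\times F$-invariant, hence invariant under $G^q$; note however that invariance under $G^q$ already makes $\mathcal D|_{L(q)}$ \emph{parallel} (transvections realize parallel transport), and it is cleaner to invoke uniqueness of the de Rham decomposition than complete reducibility of the $\mathfrak k^q$-action at the single point $q$ — the latter does not by itself exclude an invariant subspace sitting diagonally across two isomorphic irreducible factors $\mathfrak p_i^q\cong\mathfrak p_j^q$.

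The genuine gap is in your treatment of the flat factor. From ``$X\in\mathfrak p_0^q$ is central in $\mathfrak g^q$, hence generates a torus centralizing $G^q$'' you cannot conclude that this torus lies in $F$: the centralizer of $G^q$ in $I_0(M)$ is much larger than $F$ (for instance $\bar H^q$ centralizes $G^q$, since $[\bar{\mathfrak h}^q,\mathfrak g^q]\subset\bar{\mathfrak h}^q\cap\mathfrak g^q=\{0\}$, and $\bar H^q$ typically meets $G_{\mathrm{ss}}$), and there is no structural reason for the $G_{\mathrm{ss}}$-component of $X$ to vanish; the appeal to the absence of globally parallel Killing fields does not enter, since $X$ is only parallel at $q$. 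The paper closes this step with a separate lemma (Lemma \ref{flat fixed}): a Killing field $X$ induced by $H$ is bounded (as $M$ is compact) and its flow preserves $L_0(q)$, so $X|_{L_0(q)}$ is a bounded Killing field of a flat manifold, hence parallel, hence identically zero because $X(q)=0$; since $H$ is connected this shows $T_qL_0(q)\subset\mathcal D_q$. (Alternatively, one can argue algebraically from the decomposition \eqref{isotropy decomposition}: $\Lie(I(M)_q)=\mathfrak k^q\oplus\bar{\mathfrak h}^q$, and both summands act trivially on $\mathfrak p_0^q\cdot q$, the first because $[\mathfrak k^q,\mathfrak p_0^q]=\{0\}$ and the second by definition.) You need one of these arguments in place of the torus claim.
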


\begin{proof}
From \cite{R}, one has that the full connected group of isometries
$I_0(M)$ of $M$ is $G_{\mathrm{ss}} \times F$, where $G_{\mathrm{ss}}$
is the semisimple part of $G$ and the Killing fields induced by $F$
are the $G$-invariant vector fields (such a field is determined
uniquely by a vector in $\mathcal D_q$).  Then the flow of any Killing
field induced by $F$ preserve the (autoparallel) distribution
$\mathcal D$ of fixed vectors of the isotropies.  Observe that
$G_{\mathrm{ss}}$ also preserves $\mathcal D$, since $G$ does so.  So,
$\mathcal D$ is preserved by $I_0(M)$.  In particular, $\mathcal
D|_{L(q)}$ is preserved by the transvection group $G^q \subset
I_0(M)$ of $L(q)$.  Then $\mathcal D|_{L(q)}$ is a parallel
distribution of $L(q)$, which contains, by Lemma \ref{flat fixed}, the
whole flat factor $L_0(q)$.  This implies the assertion.
\end{proof}

\begin{lema}\label{flat fixed}
Let $L_0(q) = G_0(q) \cdot q$ be the flat part of $L(q)$.  Then $T_q(L
_0(q))$ is included in the set of fixed vectors in $T_qM$ of $H$.  
\end{lema}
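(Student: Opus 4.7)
The goal is to show that for every $X\in\mathfrak{p}_0^q$ the vector $X(q)\in T_qL_0(q)$ is fixed by the isotropy $H$ at $q$. Passing to the Lie algebra level (we may assume $H$ connected by replacing $G$ by its universal cover, using the simple connectedness of $M$), it suffices to prove the infinitesimal statement $(\nabla Z)_q\,X(q)=0$ for every $Z\in\mathfrak{h}$. Since $(\nabla X)_q=0$, one has
$$[Z,X]_q = (\nabla_Z X)_q - (\nabla_X Z)_q = -(\nabla Z)_q\bigl(X(q)\bigr),$$
so the problem reduces to proving $[Z,X]_q=0$ for every $Z\in\mathfrak{h}$ and every $X\in\mathfrak{p}_0^q$.

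The plan is to split $Z$ using the decomposition (\ref{isotropy decomposition}) applied with $J=\{0,1,\dots,s\}$. Since $H$ fixes $q$, one has $\mathfrak{h}\subset\Lie(I(M)_q)=\mathfrak{k}^q\oplus\bar{\mathfrak{h}}^q$, so we may write $Z=Z_k+Z_h$ with $Z_k\in\mathfrak{k}^q$ and $Z_h\in\bar{\mathfrak{h}}^q$; the two summands are then handled for completely different structural reasons.

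For the $\bar{\mathfrak{h}}^q$-part, $Z_h$ vanishes identically on $L(q)$, so in particular $Z_h(q)=0$; moreover, because $X(q)\in T_qL(q)$ is tangent to the submanifold $L(q)$ along which $Z_h\equiv 0$, a curve $t\mapsto\gamma(t)\subset L(q)$ with $\gamma'(0)=X(q)$ satisfies $Z_h\circ\gamma\equiv 0$, whence $\nabla_{X(q)}Z_h=0$. Therefore $[Z_h,X]_q=(\nabla_{Z_h}X)_q-(\nabla_XZ_h)_q=0$. For the $\mathfrak{k}^q$-part, Corollary~\ref{ideals} supplies $\mathfrak{k}_0^q=\{0\}$, so $\mathfrak{k}^q=\bigoplus_{i\ge1}\mathfrak{k}_i^q\subset\bigoplus_{i\ge1}\mathfrak{g}_i^q$, and the same corollary gives the pairwise commutativity of the ideals $\mathfrak{g}_i^q$. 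Since $X\in\mathfrak{p}_0^q\subset\mathfrak{g}_0^q$, this forces $[\mathfrak{k}^q,\mathfrak{p}_0^q]=0$, and in particular $[Z_k,X]=0$ as Killing fields. Adding the two contributions gives $[Z,X]_q=0$, as desired.

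I do not foresee a serious obstacle: once (\ref{isotropy decomposition}) is in place, the proof is essentially a two-line structural computation and does not even invoke Proposition~\ref{bracket formula}. The place to be careful is the vanishing of $\nabla_{X(q)}Z_h$, which genuinely uses that $X(q)$ is tangent to the full leaf $L(q)$ along which $Z_h$ vanishes; the rest is the input from Corollary~\ref{ideals} that the flat factor carries no symmetric-isotropy algebra, together with the fact that the remaining ideals of $\mathfrak{g}^q$ commute with $\mathfrak{p}_0^q$.
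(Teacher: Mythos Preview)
Your argument is correct and there is no circularity: Corollary~\ref{ideals} and the isotropy decomposition~(\ref{isotropy decomposition}) are established in Section~3 from Lemma~\ref{effective} alone, so you are entitled to invoke them here. The computation $[Z_h,X]_q=0$ for $Z_h\in\bar{\mathfrak h}^q$ is fine (since $Z_h$ vanishes along $L(q)$ and $X(q)$ is tangent to it), and the vanishing of $[Z_k,X]$ for $Z_k\in\mathfrak k^q$ follows exactly as you say from $\mathfrak k_0^q=\{0\}$ and the commutativity of the ideals $\mathfrak g_i^q$.

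Your route, however, is genuinely different from the paper's. The paper argues geometrically: a Killing field $Z\in\mathfrak h$ is bounded on the compact manifold $M$; its flow preserves $L_0(q)$, so $Z|_{L_0(q)}$ is a bounded intrinsic Killing field of the complete flat manifold $L_0(q)$, hence parallel; since $Z(q)=0$ it vanishes identically on $L_0(q)$, and thus its flow fixes $T_qL_0(q)$. This is self-contained and uses neither Corollary~\ref{ideals} nor the decomposition~(\ref{isotropy decomposition}); the only input is the classical fact that bounded Killing fields on flat manifolds are parallel. Your proof instead leverages the algebraic machinery already in place, reducing the statement to the structural vanishing $\mathfrak k_0^q=\{0\}$. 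In effect the two arguments isolate the same phenomenon---the symmetric isotropy of the flat factor is trivial---but the paper extracts it directly from flatness and boundedness, while you read it off from Corollary~\ref{ideals}.
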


\begin {proof}
Let $X$ be a Killing field induced by $H$.  Then $X$ is bounded, since
$M$ is compact.  The one-parameter group of isometries induced by $X$
must leave $L_0(q)$ invariant.  So, $X|_{L_0(q)}$ is always tangent to
$L_0(q)$.  Since $X$ is bounded and $L_0(q)$ is flat, then
$X|_{L_0(q)}$ must be parallel.  Since $X(q) = 0$, then $X|_{L_0(q)} =
0$.  The assertion follows since $H$ is connected. 
\end {proof}

\begin{lema}\label{G}
Let $M = G/H$ be compact locally irreducible normal homogeneous space
which is not locally symmetric.  Let $L_i(q)$ be a de Rham factor by
$q$ of $L(q)$ which is perpendicular at $q$ to the factor $\Sigma(q)$.
Let $X$ be a transvection at $q$ lying in $\mathfrak g_i^q =
\Lie(G^q_i)$.  Then $X$ lies in $\mathfrak g = \Lie(G)$. 
\end{lema}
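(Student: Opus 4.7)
The plan is to use the decomposition of the isometry algebra from Theorem~\ref{isometry group}: writing $\Lie(I_0(M)) = \mathfrak{g}_{\mathrm{ss}} \oplus \mathfrak{f}$, where $\mathfrak{f} = \Lie(F)$ is the algebra of $G$-invariant Killing fields, I decompose $X = X_{\mathrm{ss}} + X_F$ with $X_{\mathrm{ss}} \in \mathfrak{g}_{\mathrm{ss}} \subset \mathfrak{g}$ and $X_F \in \mathfrak{f}$. I aim for the stronger statement that $\mathfrak{g}_i^q \subset \mathfrak{g}_{\mathrm{ss}}$; since $\mathfrak{g}_{\mathrm{ss}} \subset \mathfrak{g}$, this will force $X \in \mathfrak{g}$.

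The geometric input I will need is that every $\xi \in \mathfrak{f}$ restricts to a vector field along $L_i(q)$ that is everywhere normal to $L_i(q)$. Indeed, being $G$-invariant, $\xi$ takes values in the fixed-vector distribution $\mathcal D$, and by Lemmas~\ref{flat fixed} and~\ref{factor} we have $\mathcal D|_{L(q)} = T\Sigma(\cdot)$, which is orthogonal to $TL_i(\cdot)$ because $L_i(q)$ is, by hypothesis, a de Rham factor of $L(q)$ complementary to $\Sigma(q)$.

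Next I apply Proposition~\ref{bracket formula}, in the form provided by Remark~\ref{Fi} for the foliation $\mathcal L_i$, with $\xi, \eta \in \mathfrak{f}$. The bracket $[\xi, \eta] \in \mathfrak{f}$ evaluates at $q$ into $\mathcal D_q = T_q\Sigma(q)$, which is orthogonal to $X(q) \in T_q L_i(q)$, so the right-hand side of the formula vanishes. Because $\mathfrak{g}_{\mathrm{ss}}$ commutes with $\mathfrak{f}$, one has $[\xi, X] = [\xi, X_F] \in \mathfrak{f}$, and the conclusion that $\langle [\xi, X_F], \eta\rangle_q = 0$ for all $\eta \in \mathfrak{f}$ means that $[\xi, X_F]_q \in \mathcal D_q$ is perpendicular to the whole of $\mathcal D_q$; thus $[\xi, X_F]_q = 0$. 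Since $[\xi, X_F]$ is itself $G$-invariant and $G$ acts transitively on $M$, it vanishes identically, i.e.\ $X_F$ lies in the center $Z(\mathfrak{f})$.

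The same argument applies to every transvection in $\mathfrak{p}_i^q$, so the $\mathfrak{f}$-component of any element of $\mathfrak{p}_i^q$ lies in $Z(\mathfrak{f})$. Writing $\mathfrak{g}_i^q = \mathfrak{k}_i^q \oplus \mathfrak{p}_i^q$ with $\mathfrak{k}_i^q = [\mathfrak{p}_i^q, \mathfrak{p}_i^q]$, the $\mathfrak{f}$-component of $\mathfrak{k}_i^q$ falls into $[Z(\mathfrak{f}), Z(\mathfrak{f})] = 0$, so the entire image of $\mathfrak{g}_i^q$ in $\mathfrak{f}$ is abelian. Since $L_i(q)$ is an irreducible non-flat symmetric factor, $\mathfrak{g}_i^q$ is a simple non-abelian Lie algebra, and any homomorphism from such an algebra to an abelian one must be zero; this forces the $\mathfrak{f}$-component of $\mathfrak{g}_i^q$ to vanish, giving $\mathfrak{g}_i^q \subset \mathfrak{g}_{\mathrm{ss}} \subset \mathfrak{g}$ and in particular $X \in \mathfrak{g}$. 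The most delicate point is the geometric one: verifying that the restriction of every $\mathfrak{f}$-field to $L_i(q)$ is normal to $L_i(q)$ at \emph{every} point (not only at $q$), since this is what is required to invoke Proposition~\ref{bracket formula}.
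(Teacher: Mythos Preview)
Your argument is correct and arrives at the same endpoint $\mathfrak g_i^q\subset\mathfrak g_{\mathrm{ss}}$, but by a genuinely different route from the paper's. The paper proceeds structurally: it proves a Sublemma showing that the semisimple part $\tilde\Sigma_{\mathrm{ss}}$ of $F$ (the paper writes $\tilde\Sigma$ for your $F$) lies inside $G_J^q$, the transvection group of the factor $\Sigma(q)=L_J(q)$. Since $\mathfrak g_i^q$ commutes with $\Lie(G_J^q)$, it then commutes with $\Lie(\tilde\Sigma_{\mathrm{ss}})$, so the $\Lie(\tilde\Sigma_{\mathrm{ss}})$-component of anything in $\mathfrak g_i^q$ is central there, hence zero; the remaining projection onto the abelian part $\mathfrak a$ of $\Lie(\tilde\Sigma)$ vanishes by semisimplicity of $\mathfrak g_i^q$. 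You instead bypass the Sublemma and apply Proposition~\ref{bracket formula} (via Remark~\ref{Fi}) with $\xi,\eta\in\mathfrak f$: the right-hand side vanishes because $[\xi,\eta]_q\in\mathcal D_q\perp T_qL_i(q)$, and since $[\xi,X]=[\xi,X_F]\in\mathfrak f$ also takes values in $\mathcal D$, you obtain $[\xi,X_F]_q=0$, whence $X_F\in Z(\mathfrak f)$ by $G$-invariance; semisimplicity finishes. Your route is more direct and reuses a tool needed later anyway; the paper's route isolates a clean structural fact (the Sublemma) of some independent interest.

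Two minor remarks. First, $\mathfrak g_i^q$ is not known to be \emph{simple} at this stage---it could be of group type $\mathfrak k\oplus\mathfrak k$, which is only established later in Proposition~\ref{Gfactor}---but it is semisimple (being the transvection algebra of the irreducible non-flat factor $L_i(q)$), and that is all your last step needs. Second, your ``delicate point'' is indeed handled by the proof of Lemma~\ref{factor}: $\mathcal D|_{L(q)}$ is a parallel sub-distribution of $\mathfrak s|_{L(q)}$ tangent to the $\Sigma$-factor, hence orthogonal to the $L_i$-factor at every point of $L(q)$. In fact, since your $\xi,\eta\in\mathfrak f$ are $G$-invariant, $\langle\xi,\eta\rangle$ is constant on $M$, so the hypothesis that the metric project down in Proposition~\ref{bracket formula} is automatic in your application.
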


Observe that the fact that $L_i(q)$ is perpendicular to $\Sigma(q)$ is
equivalent, by Lemma~\ref{factor} and Lemma \ref{flat fixed}, to
$L_i(q)$ not being contained in $\Sigma (q)$.  Observe that $i\geq 1$,
by the previous lemma. 

\begin{proof}[Proof of Lemma \ref{G}]
We have that $I_0(M) = G_{\mathrm{ss}} \times \tilde\Sigma $ where
$\tilde\Sigma$ is $\Sigma(q)$, but regarded as a Lie group:  the
Killing fields on $M$ induced by  $\tilde\Sigma(q)$ are the $G$
invariant fields, which are determined by its initial condition, which
is a vector in $T_q(\Sigma (q))$ (see Section \ref{preliminaries}).
Let $\tilde\Sigma_{\mathrm{ss}}$ be the semisimple part of the Lie
group $\tilde\Sigma $.  We need the following auxiliary result. 

\begin{sublemma}
$\tilde\Sigma_{\mathrm{ss}} \subset G_J^q$, where $J = \{j \in \{1,
  \ldots, s\}: L_j(q) \subset \Sigma_{\mathrm{ss}}(q)\}$ and
  $\Sigma_{\mathrm{ss}}(q)$ is the semisimple local factor of the
  symmetric space $\Sigma (q)$. 
\end{sublemma}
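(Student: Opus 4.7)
The plan is to transfer the problem to the action on $L(q)$ via the direct-product structure $\tilde G^q = G^q \times \bar H^q$ from Lemma \ref{direct}, and then pin down the $\mathfrak{g}_i^q$-components of $\Lie(\tilde\Sigma_{\mathrm{ss}})$ through bracket identities against transvections of the individual de Rham factors. The crucial ingredient will be that $\Lie(\tilde\Sigma)$ is a Lie ideal of $\Lie(I_0(M))$, so brackets against any Killing field of $M$ land back in $\Lie(\tilde\Sigma)$ and hence their values at $q$ lie in $T_q\Sigma(q)$.

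First I would observe that $\tilde\Sigma$ preserves $L(q)$: since $\tilde\Sigma \subset I_0(M)$ preserves the foliation of symmetry and $\tilde\Sigma \cdot q = \Sigma(q) \subset L(q)$ by Lemma \ref{factor}, one has $g \cdot L(q) = L(g \cdot q) = L(q)$ for every $g \in \tilde\Sigma$. Hence $\tilde\Sigma \subset \tilde G^q$, and Lemma \ref{direct} yields $\Lie(\tilde\Sigma) \subset \mathfrak{g}^q \oplus \bar{\mathfrak{h}}^q$. Let $\pi\colon\Lie(\tilde\Sigma) \to \mathfrak{g}^q = \bigoplus_i \mathfrak{g}_i^q$ denote the projection (using Corollary \ref{ideals}) and decompose $\pi(X) = \sum_i \pi(X)_i$. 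Because $\bar H^q$ acts trivially on $L(q)$, the actions of $X$ and $\pi(X)$ on $L(q)$ coincide. Identifying $\tilde\Sigma$ with $\Sigma(q)$ as a Lie group, the $\tilde\Sigma_{\mathrm{ss}}$-orbit of $q$ equals $\Sigma_{\mathrm{ss}}(q) = \prod_{j \in J} L_j(q)$, so $X(q) \in T_qL_J(q)$ whenever $X \in \Lie(\tilde\Sigma_{\mathrm{ss}})$; in particular $\pi(X)_i(q) = 0$ for every $i \notin J$.

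The main step will be to prove $\pi(X)_i = 0$ when $i \notin J \cup \{0\}$. Fix such $i$ and take any transvection $Y \in \mathfrak{p}_i^q$. Since $M$ is compact, $Y \in \Lie(I_0(M)) = \mathfrak{g}_{\mathrm{ss}} \oplus \Lie(\tilde\Sigma)$; writing $Y = Y_{\mathrm{ss}} + Y_\sigma$ and using $[X, \mathfrak{g}_{\mathrm{ss}}] = 0$ one gets $[X, Y] = [X, Y_\sigma] \in \Lie(\tilde\Sigma)$, and therefore $[X, Y]_q \in T_q\Sigma(q)$. Parallelism of $Y$ at $q$ rewrites this as $[X, Y]_q = -(\nabla X)_q Y(q)$. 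Applying the product connection on $L(q) = \prod_j L_j(q)$ to $X|_{L(q)} = \pi(X)|_{L(q)} = \sum_j \pi(X)_j|_{L(q)}$, with each $\pi(X)_j$ tangent only to the $L_j$-factor, one sees that $(\nabla X)_q$ maps $T_qL_i(q)$ into $T_qL_i(q)$ and agrees there with $(\nabla \pi(X)_i)_q$. Hence $(\nabla X)_q Y(q) \in T_qL_i(q) \cap T_q\Sigma(q) = \{0\}$, and letting $Y(q)$ sweep $T_qL_i(q)$ yields $(\nabla \pi(X)_i)_q|_{T_qL_i(q)} = 0$.

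Combining this with $\pi(X)_i(q) = 0$, the Killing field on $L_i(q)$ induced by $\pi(X)_i$ has vanishing $1$-jet at $q$ and therefore vanishes identically by rigidity; the almost-effectivity of $G_i^q$ on $L_i(q)$ (inherited from Lemma \ref{effective}) then gives $\pi(X)_i = 0$ in $\mathfrak{g}_i^q$. For $i = 0$ the same reasoning applies, complemented by $\mathfrak{k}_0^q = \{0\}$ from Corollary \ref{ideals}. Consequently $\pi(X) \in \mathfrak{g}_J^q$, and hence $\tilde\Sigma_{\mathrm{ss}} \subset G_J^q$. The principal obstacle is the bracket computation above: one must use simultaneously the ideal property of $\Lie(\tilde\Sigma)$ in $\Lie(I_0(M))$ (to place $[X, Y]_q$ in $T_q\Sigma(q)$) and the product decomposition of the Levi-Civita connection on the symmetric leaf $L(q)$ (to keep $(\nabla X)_q Y(q)$ inside $T_qL_i(q)$), so that the orthogonality $T_qL_i(q) \perp T_q\Sigma(q)$ can be exploited.
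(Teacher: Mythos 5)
Your computation of the $\mathfrak g^q$-components is essentially sound: the identity $[X,Y]_q=-(\nabla X)_qY(q)$ for $Y\in\mathfrak p_i^q$, the fact that $[X,Y]\in\Lie(\tilde\Sigma)$ (so $[X,Y]_q\in T_q\Sigma(q)$), and the product structure of the connection on the totally geodesic leaf $L(q)$ do combine to give $\pi(X)_i=0$ for $i\notin J$. The problem is the last sentence. You have decomposed $X=\pi(X)+X_{\bar h}$ with $X_{\bar h}\in\bar{\mathfrak h}^q$, and you explicitly note that only the \emph{restrictions to $L(q)$} of $X$ and $\pi(X)$ agree; nothing in your argument excludes $X_{\bar h}\neq 0$. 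So ``$\pi(X)\in\mathfrak g_J^q$, hence $\tilde\Sigma_{\mathrm{ss}}\subset G_J^q$'' is a non sequitur: what you have actually proved is the weaker containment $\Lie(\tilde\Sigma_{\mathrm{ss}})\subset\mathfrak g_J^q\oplus\bar{\mathfrak h}^q$, not the sublemma as stated, which asserts a containment of subgroups of $I(M)$, not of their actions on $L(q)$.

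The missing ingredient is precisely the paper's argument, and once you supply it your long differential-geometric computation becomes redundant. Since $\tilde\Sigma_{\mathrm{ss}}$ preserves $L_J(q)=\Sigma_{\mathrm{ss}}(q)$ and is normal in $I(M)$, its Lie algebra $\mathfrak s$ is an ideal of $\tilde{\mathfrak g}_J^q=\mathfrak g_J^q\oplus\bar{\mathfrak h}_J^q$ (direct sum of ideals, by \eqref{***}). Moreover $\mathfrak s\cap\bar{\mathfrak h}_J^q=0$: elements of $\bar{\mathfrak h}_J^q$ vanish at $q$, while a Killing field in $\Lie(\tilde\Sigma)$ is a $G$-invariant field and hence determined by its value at $q$. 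Writing $X=X_J+X_{\bar h}$ accordingly, for $X,Y\in\mathfrak s$ one gets $[X,Y_{\bar h}]=[X_{\bar h},Y_{\bar h}]\in\mathfrak s\cap\bar{\mathfrak h}_J^q=0$, so $[\mathfrak s,\mathfrak s]\subset\mathfrak g_J^q$, and semisimplicity ($\mathfrak s=[\mathfrak s,\mathfrak s]$) gives $\mathfrak s\subset\mathfrak g_J^q$. Note that $\bar{\mathfrak h}_J^q$ already contains both $\bar{\mathfrak h}^q$ and all the ideals $\mathfrak g_i^q$ with $i\notin J$, so this single algebraic step simultaneously kills the $\bar{\mathfrak h}^q$-component you overlooked and all the components $\pi(X)_i$, $i\notin J$, that you obtained by the bracket formula.
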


\begin{proof}
We have that $\tilde\Sigma_{\mathrm{ss}}$ is a semisimple normal subgroup of
$$\tilde G_J^q = G_J^q \times \bar H_J^q \qquad \text{(almost direct
  product)},$$ 
since it leaves $\Sigma_{\mathrm{ss}}(q)$ invariant and it is a normal
subgroup of $I(M)$.  Let us show that $\tilde\Sigma_{\mathrm{ss}} \cap
\bar H_J^q $ is discrete.  In fact, if $X$ is a Killing field in the
Lie algebra of this group, then $X(q) = 0$, since  $\bar H_J^q$ is
contained in the isotropy $(I(M))_q$.  But a Killing field in the Lie
algebra of $\tilde\Sigma_{\mathrm{ss}} \subset \tilde\Sigma$ is
determined by its value at $q$.  Then $X = 0$ and therefore
$\tilde\Sigma_{\mathrm{ss}} \subset G_J^q$ (we are using here that
$\tilde\Sigma_{\mathrm{ss}}$ has no abelian part). 
\end{proof}

We continue with the proof of Lemma \ref{G}. By the above sublemma,
$\tilde\Sigma_{\mathrm{ss}} \subset G_J^q$.  But, since $[\mathfrak
  g_i^q, \Lie(G_J^q)] = \{0\}$ and $\mathfrak g_i^q \cap \Lie(G_J^q) =
\{0\}$, one has that $[\mathfrak g_i^q, \Lie(\Sigma_{\mathrm{ss}})] =
\{0\}$ and $\mathfrak g_i^q \cap \Lie(\Sigma_{\mathrm{ss}}) = \{0\}$.
This implies that 
$$\mathfrak g_i^q \subset \Lie(G_{\mathrm{ss}}) \oplus \mathfrak a
\subset \mathfrak g \oplus \mathfrak a \oplus
\Lie(\Sigma_{\mathrm{ss}}) = \Lie(I(M))$$ 
(direct sum of ideals), where $\mathfrak a$ is the abelian Lie algebra
associated to the flat part $\tilde\Sigma_0$ of $\tilde\Sigma$.  Since
$\mathfrak g_i^q$ is semisimple one must have that $\mathfrak g_i^q
\subset \Lie(G_{\mathrm{ss}}) \subset \mathfrak g$.  This shows the
lemma.
\end{proof}

Let $X$ be a transvection at $q$ which lies in $\mathfrak g_i^q =
\Lie(G_i^q)$, where the de Rham factor $L_i(q)= G_i^q \cdot q$ is
perpendicular at $q$ to  the factor $\Sigma(q)$ of $L(q)$.  From
Lemma~\ref{G} we have that $X$ lies in $\mathfrak g = \Lie(G)$.
Choose $Y \in \mathfrak m = \mathfrak h^\bot$ such that $Y \cdot q =
\frac{1}{2} X \cdot q$.  Recall that the linear map, from $\mathfrak m$
into itself, $U \mapsto [W, U]_{\mathfrak m}$ is skew-symmetric for
all $W \in \mathfrak m$ (since $M = G/H$ has a normal homogeneous
metric).  Also observe that any $\xi \in \mathfrak m$, regarded as the
Killing field $x \mapsto \xi \cdot x$, which is perpendicular at $q$ to
$L(q)$ remains always perpendicular to $L(q)$ (see Remark
\ref{always-perpendicular}).  Then, for any arbitrary  $\xi, \eta \in
\mathfrak m$ which are perpendicular to $L_i(q)$ at $q$,
$$\langle[\xi, Y], \eta\rangle_q = -\langle Y, [\xi, \eta
]\rangle_q.$$ 

Then, from Proposition \ref{bracket formula}, one obtains that $Z = X -
Y$ satisfies 
$$\langle[\xi, Z], \eta\rangle_q = 0.$$
This implies that the Killing field $Z$, regarded in the quotient
$M/\mathcal L_i$ of $M$ by the foliation $\mathcal L_i$ is identically
zero (since its initial conditions are both zero).  The quotient
$M/\mathcal L_i$ can be regarded globally, since the leaves $L_i(x)$
are compact. Then, the Killing field $Z$ of $M$ is always tangent to
the leaves of the foliation $\mathcal L_i$.  Observe that $Z \cdot q$ is
an arbitrary vector of $T_q(L_i(q))$, since $Z \cdot q = \frac{1}{2}X
\cdot q$.  Observe that the same argument shows that there is a Killing
field, always tangent to the leaves of the foliation $\mathcal L_i$,
with an arbitrary value in $T_x(L_i(x))$, for any fixed $x \in M$.

Let now $\hat{\mathfrak g}_i$ be the ideal of $\mathfrak g$ which
consists of the Killing field that are always tangent to the leaves of
$\mathcal L_i$.  Let $\hat G_i$ be the Lie normal subgroup of $G$
associated to $\hat{\mathfrak g}_i$.  The orbits of $\hat G_i$, by
what previously observed, are the leaves $L_i(x)$ of the foliation
$\mathcal L_i$. Let $G'_i$ be the Lie subgroup of $G$, associated to
the complementary ideal $\mathfrak {g}'_i :=(\hat{\mathfrak
  g}_i)^\bot$.  We have that 
$$G = \hat G_i \times G'_i \qquad \text {(almost direct product)}$$
and that $G'_i$ acts transitively on the quotient $M/\mathcal L_i =
I_0(M)/\tilde G_i^q$.  Here, we denote by $\tilde G_i^q$ the group
$\tilde G_{\{i\}}^q$, according with the notation given in \ref{tilde
  G_J}. 

Let now $Z \in \hat{\mathfrak g}_i$ be such that its associated
Killing field on $M$ vanishes identically on $L_i(x)$, for some $x \in
M$.  If $y\in M$, then there exists $g' \in G'_i$ such that $g' \cdot~
L_i(x) = L_i(y)$, since $G'_i$ acts transitively on the quotient
$M/\mathcal L _i $.  We may assume, by replacing $x$ by another
element in $L_i(x)$, that $g' \cdot x = y$.  Then 
$$Z \cdot y = Z \cdot (g' \cdot x) = dm_{g'}(\Ad((g')^{-1})(Z)) \cdot
x = dm_{g'}(Z \cdot x) = 0,$$ 
where $m$ denotes de action of $G$ on $M$.  This shows that the
Killing field associated to $Z$ vanishes identically.  So, $Z = 0$.
  
\begin{lema}
We have that the normal subgroup $\hat G_i$ of $G$ is contained in the
transvection group $G_i^x$ of $L_i(x)$, for any $x \in M$. 
\end{lema}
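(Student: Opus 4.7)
The plan is to combine the stabilizer decomposition of equation \eqref{***}, namely $\tilde G_i^x = G_i^x \times \bar H_i^x$, with the injectivity result from the paragraph preceding the lemma asserting that any element of $\hat{\mathfrak g}_i$ vanishing on $L_i(x)$ is identically zero on $M$.

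First I would observe that $\hat G_i \subset \tilde G_i^x$: the orbits of $\hat G_i$ are precisely the leaves of $\mathcal L_i$, so in particular each element of $\hat G_i$ stabilizes $L_i(x)$. For any $Z \in \hat{\mathfrak g}_i$, decompose $Z = Z_1 + Z_2$ uniquely under \eqref{***}, with $Z_1 \in \mathfrak g_i^x$ and $Z_2 \in \bar{\mathfrak h}_i^x$; following the construction in the proof of Lemma~\ref{direct}, choose $Z_1$ so that $Z_1|_{L_i(x)} = Z|_{L_i(x)}$, so that $Z_2|_{L_i(x)} = 0$. The lemma is then equivalent to $Z_2 = 0$.

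My approach would be to prove that $Z_2 \in \hat{\mathfrak g}_i$, since combined with $Z_2|_{L_i(x)} = 0$ this gives $Z_2 = 0$ by the preceding paragraph. That $Z_2 \in \mathfrak g$ follows from $Z \in \hat{\mathfrak g}_i \subset \mathfrak g$ and $Z_1 \in \mathfrak g_i^x \subset \mathfrak g$ (the latter by Lemma~\ref{G}); what must be shown is that $Z_2$ is always tangent to $\mathcal L_i$, equivalently (in view of $Z \in \hat{\mathfrak g}_i$) that $Z_1 \in \hat{\mathfrak g}_i$. To this end I would decompose $Z_1 = A + B$ along the almost-direct sum $\mathfrak g = \hat{\mathfrak g}_i \oplus \mathfrak g_i'$ with $A \in \hat{\mathfrak g}_i$ and $B \in \mathfrak g_i'$; then $B = Z_1 - A$ lies in $\mathfrak g_i' \cap \tilde{\mathfrak g}_i^x$, and the commutativity $[\hat{\mathfrak g}_i, \mathfrak g_i'] = 0$ forces $B$ to centralize the transitive almost-effective action of $\hat G_i$ on the compact irreducible symmetric space $L_i(x)$. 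Since the centralizer of any such action in $I(L_i(x))$ is discrete, $B$ acts trivially on $L_i(x)$.

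The main obstacle is to upgrade this to $B = 0$ globally, and I would do so by transporting the vanishing across the leaf space: for any $y \in M$, write $y = g' \cdot w$ with $g' \in G_i'$ and $w \in L_i(x)$ (possible by transitivity of $G_i'$ on $M/\mathcal L_i$), and exploit both the $\hat G_i$-invariance of $B$ and the compatibility of the decomposition \eqref{***} with conjugation by $G_i'$ (i.e., $\Ad(g') \tilde{\mathfrak g}_i^x = \tilde{\mathfrak g}_i^{g'\cdot x}$ and similarly for the two factors) to conclude $B(y) = 0$; this mirrors the $\Ad((g')^{-1}) Z = Z$ argument used just before the lemma to propagate a vanishing condition from $L_i(x)$ to all of $M$. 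Once $B = 0$, one has $Z_1 = A \in \hat{\mathfrak g}_i$, hence $Z_2 = Z - Z_1 \in \hat{\mathfrak g}_i$, completing the proof.
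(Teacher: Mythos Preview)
Your argument has a genuine gap at the centralizer step. You assert that ``the centralizer of any such action in $I(L_i(x))$ is discrete,'' and use this to conclude that $B$ acts trivially on $L_i(x)$. This claim is false in exactly the situation that occurs here. Indeed, the very next result in the paper (Proposition~\ref{Gfactor}) shows that $L_i(x)$ is of group type: $L_i(x)\simeq K$ for a compact simple Lie group $K$, with $\hat G_i$ acting as one of the two $K$-factors of $G_i^x\simeq K\times K$, say by left translations. The centralizer of that transitive, effective action inside $I_0(L_i(x))$ is precisely the other $K$-factor (right translations), which has positive dimension. So there is no reason for $B|_{L_i(x)}$ to vanish, and in fact the subspace $\mathfrak g_i^x\cap\mathfrak g_i'$ is exactly $\Psi(\hat{\mathfrak g}_i)$ from the remark after Proposition~\ref{Gfactor}, which is nonzero and acts nontrivially on $L_i(x)$. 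Your subsequent ``transport of vanishing'' paragraph presupposes $B|_{L_i(x)}=0$, so it does not rescue the argument.

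The paper's proof avoids this issue entirely by exploiting a structural fact you did not use: $\hat G_i$ is \emph{normal} in $\tilde G_i^x$ (it is in fact normal in $I_0(M)$, since $\hat{\mathfrak g}_i$ is an ideal of $\mathfrak g_{\mathrm{ss}}$, which in turn is an ideal of $\Lie I_0(M)=\mathfrak g_{\mathrm{ss}}\oplus\Lie\tilde\Sigma$). Once $\hat{\mathfrak g}_i$ is an ideal of $\tilde{\mathfrak g}_i^x=\mathfrak g_i^x\oplus\bar{\mathfrak h}_i^x$, the two facts you already identified --- $\hat{\mathfrak g}_i\cap\bar{\mathfrak h}_i^x=\{0\}$ (your injectivity observation) and the semisimplicity of $\mathfrak g_i^x$ --- force $\hat{\mathfrak g}_i\subset\mathfrak g_i^x$ by a short purely algebraic argument (an ideal of a direct sum meeting one summand trivially, with the other summand semisimple, must lie in the semisimple summand). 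Normality is the missing ingredient that replaces your centralizer step.
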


\begin{proof}
If $x\in M$, then
$$\hat G_i \subset \tilde G_i^x = G_i^x \times \bar H_i^x,$$ 
since $\hat G_i$ leaves invariant any leaf of $\mathcal L_i$.  We have
that $\hat G_i \cap \bar H_i^q$ is discrete.  Otherwise, there would
exists a $Z \in \hat{\mathfrak g}_i$ which vanishes identically on
$L_i(x)$ and therefore, as previously observed, $ Z = 0$.  The
conclusion follows, by making use that $\hat G_i$ is a normal subgroup
of $\tilde G_i^x$ and the fact that $G_i^x$ is semisimple. 
\end{proof}

\begin{prop}\label{Gfactor}
The locally symmetric (irreducible) space $L_i(x)$ is of group
type. Namely, $G_i^x = K \times K$ (almost direct product), where $K$
is a simple Lie group (of compact type).  Moreover, $\hat G_i$
coincides with one of the factors $K$. 
\end{prop}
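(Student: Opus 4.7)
My plan is to combine the previous lemma (which places $\hat G_i$ inside $G_i^x$ as a connected transitive normal subgroup) with the classification of transvection groups of compact irreducible symmetric spaces. For such a space, $G_i^x$ is either simple (the non-group type) or isomorphic to $K \times K$ with $K$ a simple compact Lie group (the group type); the nontrivial connected normal subgroups of $G_i^x$ are therefore only $G_i^x$ itself (both types) and, in the group-type case, the two simple factors $K$. Since $\hat G_i$ is nontrivial (being transitive on $L_i(x)$), the whole proposition reduces to ruling out the case $\hat G_i = G_i^x$.

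To derive a contradiction from $\hat G_i = G_i^x$, I would first invoke Remark \ref{bilinear} for the semisimple ideal $\hat{\mathfrak{g}}_i$ of $\mathfrak{g}$ to obtain an orthogonal splitting $\mathfrak{g} = \hat{\mathfrak{g}}_i \oplus \mathfrak{g}'_i$ into commuting ideals with respect to the $\Ad$-invariant form defining the normal homogeneous metric. Under the standing assumption $\hat G_i = G_i^x$, the isotropy $\hat G_i \cap H$ coincides with the full symmetric isotropy of $L_i(x)$ at $q$ and hence acts on $T_q L_i(q)$ through the isotropy representation of the irreducible compact symmetric space $L_i(x)$; this representation is irreducible and non-trivial (both when $G_i^x$ is simple and when $\hat G_i = K\times K$ has diagonal isotropy acting by adjoint on $\mathfrak{k}$), so its only fixed vector is zero. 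On the other hand, because this isotropy lies in $\hat G_i$ and commutes with $G'_i$, it fixes the orbit $G'_i \cdot q$ pointwise and thus acts trivially on $T_q(G'_i \cdot q)$. Equivariance of the orthogonal projection $T_qM \to T_q L_i(q)$ then forces $T_q(G'_i \cdot q)$ to project to zero, so $T_q(G'_i \cdot q)\subset T_qL_i(q)^\perp$; by transitivity of $\hat G_i \cdot G'_i$ on $M$ and a dimension count, this inclusion is an equality.

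From this orthogonality at $q$ I extract a compatible splitting of $\mathfrak{h}$: given $h \in \mathfrak{h}$, writing $h = h_1 + h_2$ along $\hat{\mathfrak{g}}_i \oplus \mathfrak{g}'_i$, the identity $h_1(q) + h_2(q) = h(q) = 0$ with $h_1(q) \in T_qL_i(q)$ and $h_2(q) \in T_qL_i(q)^\perp$ forces $h_1(q) = h_2(q) = 0$, whence $\mathfrak{h} = (\mathfrak{h}\cap \hat{\mathfrak{g}}_i)\oplus (\mathfrak{h}\cap \mathfrak{g}'_i)$. Combined with the orthogonality of the two ideals under the $\Ad$-invariant form, the restriction to $\mathfrak{m}$ splits as an $\Ad(H)$-orthogonal direct sum, exhibiting $M$ as a Riemannian product of $L_i(x)$ and $G'_i/(G'_i\cap H)$. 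This contradicts the local irreducibility of $M$, so $\hat G_i$ is a proper connected normal subgroup of $G_i^x$, and the classification leaves only the group-type case with $\hat G_i = K$.

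The main obstacle I anticipate is the orthogonality step $T_q(G'_i\cdot q) = T_qL_i(q)^\perp$, which hinges both on correctly identifying $\hat G_i \cap H$ with the symmetric isotropy and on the fact that the isotropy representation of any irreducible non-Euclidean compact symmetric space has no nonzero fixed vectors; the subsequent splitting of $\mathfrak{h}$ and the Riemannian product conclusion then follow cleanly from the commuting ideal structure and the naturally reductive set-up.
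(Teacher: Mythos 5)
Your proposal is correct, and its overall skeleton (reduce to excluding $\hat G_i = G_i^x$, then classify the proper connected normal subgroups of the transvection group of an irreducible symmetric space) is the same as the paper's. The difference lies in how the case $\hat G_i = G_i^x$ is ruled out. The paper's argument is shorter and purely geometric: if $\hat G_i = G_i^x$, every transvection at $x$ lying in $\mathfrak g_i^x$ belongs to the ideal $\hat{\mathfrak g}_i$, so choosing $X_1,\dots,X_r\in\mathfrak p_i^x$ with $X_1\cdot x,\dots,X_r\cdot x$ a basis of $T_x(L_i(x))$ produces a local frame of $\mathfrak s_i$ consisting of Killing fields parallel at $x$; hence $\mathfrak s_i$ is parallel at $x$, and since $x$ is arbitrary it is a nontrivial proper parallel distribution, contradicting local irreducibility via de Rham. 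Your route instead exploits the isotropy representation: you show the symmetric isotropy $\mathfrak k_i^q\subset\hat{\mathfrak g}_i$ has no nonzero fixed vector in $T_qL_i(q)$ but fixes $T_q(G_i'\cdot q)$ pointwise, deduce $T_q(G_i'\cdot q)=T_qL_i(q)^{\perp}$ by equivariance of the orthogonal projection, split $\mathfrak h$ along the orthogonal ideals $\hat{\mathfrak g}_i\oplus\mathfrak g_i'$, and exhibit $M$ as a local Riemannian product — again contradicting irreducibility. Both arguments are sound and rest on facts already established before the proposition ($\hat G_i\subset G_i^x$, the orbits of $\hat G_i$ being the leaves $L_i(x)$, and $G=\hat G_i\times G_i'$). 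What your version buys is explicit Lie-algebraic information (the splitting of $\mathfrak h$ and $\mathfrak m$ compatible with the product), at the cost of several extra verifications (equivariance of the projection, absence of fixed vectors in both the group and non-group cases); what the paper's version buys is brevity, since the parallelism of $\mathfrak s_i$ follows in one line from the defining property of transvections. You also make explicit the final identification of $\hat G_i$ with one of the simple factors $K$, which the paper leaves implicit in the classification of normal subgroups of $K\times K$.
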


\begin{proof}
Assume first that $\hat G_i = G_i^x$.  Then any transvection $X \in
\mathfrak p_i^x \subset \mathfrak g _i^x$ at $x$ lies in
$\hat{\mathfrak g}_i$.  Let now $X_1, \ldots, X_r \in \mathfrak p_i^x$
(i.e.\ transvections at $x$) be such that $X_1 \cdot x, \ldots, X_r
\cdot x$ is a basis of $T_x(L_i(x))$.  Then $X_1, \ldots, X_r$ is a
local trivialization of the distribution $\mathfrak s_i$ of tangent
spaces to the leaves of $\mathcal L_i$.  Since $X_1, \ldots, X_r$ are
parallel at $x$ we conclude that the distribution $\mathfrak s_i$ is
parallel at $x$.  Since $x \in M$ is arbitrary we have that the
(non-trivial) distribution $\mathfrak s_i$ is parallel and so $M$
locally splits since $\dim {\mathfrak s_i} < n = \dim M$ because $M$
is not symmetric.  A contradiction. 

So, $\hat G_i$ is a proper, non-trivial, normal subgroup of $G_i^x$.
Since $L_i(x)$ is a locally irreducible symmetric space the isometry
group $G_i^x$ is semi-simple.  But it cannot be simple, since it has a
proper normal subgroup.  Then $L_i(x)$ must be of group type and
$G_i^x = K \times K$ (almost direct product), with $K$ simple, since
$L_i(x)$ is locally irreducible. 
\end{proof}

\begin{nota}\label{diagonal}
Recall that $L_i(x)$ is Lie group with a bi-invariant metric (since it
is a globally symmetric space of the group type). 
\end{nota}

\begin{nota}
Let us fix $q \in M$. Let $\Psi: \hat{\mathfrak g}_i \to \mathfrak
g_i'$ be defined as follows.  Given $X \in \hat{\mathfrak g}_i$,
$\Psi(X)$ is the unique element of $\mathfrak g_i'$ such that $\Psi(X)
\cdot q = -X \cdot q$.  We get that $\Psi: \hat{\mathfrak g}_i \to
\Psi(\hat{\mathfrak g}_i)$ is an isomorphism of Lie algebras.
Moreover, $\mathfrak g_i^q = \hat{\mathfrak g}_i \oplus
\Psi(\hat{\mathfrak g}_i)$ and
$$\mathfrak k_i^q = \{X + \Psi(X): X \in \hat{\mathfrak g}_i\} =:
\diag(\hat{\mathfrak g}_i \oplus \Psi(\hat{\mathfrak g}_i)).$$ 
\end{nota}

Let $\mathfrak{h}'$ be the ideal of $\mathfrak{h} =\Lie (H) \subset
\mathfrak {g} $ which consists of the Killing fields that vanish
identically on the leaf $L_i(q)$.  We have, from  Lemma~\ref{G}, that
$\mathfrak k_i^q \subset \mathfrak h$.  Then with the same arguments
as in \ref{isotropy decomposition}, for the case where $\mathfrak{h}$
is the full isotropy algebra, we can decompose 
$$\mathfrak h = \mathfrak h' \oplus \mathfrak k_i^q \qquad
\text{(direct sum of ideals)}.$$ 

Since $\mathfrak k_i^q$ is simple, from Remark \ref {bilinear}, the
above decomposition must be orthogonal with respect to the $\Ad
(G)$-invariant inner product $\langle\cdot, \cdot\rangle$ on
$\mathfrak g$ (or, more generally, with respect to any $\Ad
(G)$-invariant symmetric bilinear form on $G$). 

Since $\mathfrak h' \subset \bar{\mathfrak h}_i^q$ and $\bar{\mathfrak
  h}_i^q$ is an ideal of $\tilde{\mathfrak g}_i^q$, we get that
$\mathfrak h'$ is an ideal of $\mathfrak h' \oplus \mathfrak g_i^q$.
This decomposition, since $\mathfrak g_i^q$ is semisimple, must be
orthogonal by Remark \ref{bilinear}. 

On the other hand, the decomposition $\mathfrak g = \mathfrak h'
\oplus \mathfrak k_i^g \oplus \mathfrak m$ is also orthogonal.  So, we
get that $(\mathfrak k_i^q)^\bot \cap \mathfrak g_i^q \subset
\mathfrak m$. 

Let us define
$$\mathfrak m_1 := (\mathfrak k_i^q)^\bot \cap \mathfrak g_i^q \subset
\mathfrak m  \qquad \text{and} \qquad \mathfrak m_2 := (\mathfrak
m_1)^\bot \cap \mathfrak m.$$

Keeping the notation of Proposition \ref{Gfactor} and Remark
\ref{diagonal} we have that the decomposition $\mathfrak g_i^q =
\hat{\mathfrak g}_i \oplus \hat{\mathfrak g}_i$, where the first
summand is regarded as an ideal of $\mathfrak g$ and the second one as
a subalgebra of $\mathfrak g_i'$, must be orthogonal (see Remark
\ref{bilinear}).  Moreover, just by taking a positive multiple of the
$\Ad(G)$-invariant inner product $\langle\cdot, \cdot\rangle$ on
$\mathfrak g$, we may assume that the restriction of $\langle\cdot,
\cdot\rangle$ to $\mathfrak g_i^q = \hat{\mathfrak g}_i \oplus
\hat{\mathfrak g}_i$ has the form
$$\langle\cdot, \cdot\rangle|_{\mathfrak g_i^q} = B \oplus \lambda B$$
for some $\lambda > 0$, where $B$ is the negative of the Killing form
of $\hat{\mathfrak g}_i$. 

\medbreak

With these notation and identifications we have 
$$\mathfrak k_i^q = \{(v,v):  v \in \hat{\mathfrak g}_i\}, \qquad
\mathfrak p_i^q = \left\{\left(\frac{1}{2}v, -\frac{1}{2}v\right): v
\in \hat{\mathfrak g}_i\right\}$$
and
$$\mathfrak m_1  = \left\{\left(\frac{-\lambda}{1 + \lambda}v,
\frac{1}{1 + \lambda}v\right): v \in \hat{\mathfrak g}_i\right\}.$$ 

With this notation, if $X = (u, v) \in \mathfrak g_i^q$, then
$$X \cdot q = (u,v) \cdot q = (u - v) \cdot q.$$

Let  $a = \frac{1}{2}(\lambda - 1)$.  Then, for all $v \in
\hat{\mathfrak g}_i$,
$$\left(\left(\frac{1}{2} + a\right)v, -\frac{1}{2}v\right) \in
\mathfrak m_1.$$
In fact, if $(w, w) \in \mathfrak k_i^q$, then
\begin{align*}
\left\langle(w, w), \left(\left(\frac{1}{2} + a\right)v,
-\frac{1}{2}v\right)\right\rangle & = \left\langle w,
\left(\frac{1}{2} + a\right)v\right\rangle + \lambda\left\langle w,
-\frac{1}{2}v\right\rangle \\
& = \left(\frac{1}{2} + a - \frac{\lambda}{2}\right)\langle w,
v\rangle \\
& = \left(\frac{1}{2} + \frac{1}{2}(\lambda - 1) -
\frac{\lambda}{2}\right)\langle w, v\rangle = 0. 
\end{align*}

Note that 
$$\left(\left(\frac{1}{2} + a\right)v, -\frac{1}{2}v\right) \cdot q = (1 +
a)v \cdot q. $$

Observe, identifying $T_qM \simeq \mathfrak m$, that 
$$\mathfrak m_2 = (T_q(L_i(q)))^\bot.$$

Let now $\xi, \eta \in \mathfrak m_2$.  As we have previously observed,
the Killing fields on $M$ induced by $\xi, \eta$ are always
perpendicular to $L_i(q)$.  If $X = (\frac{1}{2}v, -\frac{1}{2}v) \in
\mathfrak p_i^q$ is a transvection at $q$, then, by Proposition
\ref{bracket formula},
$$\langle[\xi, X], \eta\rangle_q = -\frac{1}{2}\langle X, [\xi,
  \eta]\rangle_q.$$

Since $\hat {\mathfrak g}_i$ is an ideal,
$$\langle[\xi, (av, 0)], \eta\rangle_q = 0.$$

Then, if $Y = X + (av, 0)$,
$$\langle[\xi, Y], \eta\rangle_q = \langle[\xi, X], \eta\rangle_q =
-\frac{1}{2}\langle X, [\xi, \eta]\rangle_q = -\frac{1}{2}\langle X
\cdot q, [\xi, \eta] \cdot q\rangle.$$
Since $X \cdot q = \frac{1}{1 + a}Y \cdot q$, the above equality yields
$$\langle[\xi, Y], \eta\rangle_q = -\frac{1}{2(1 + a)}\langle Y \cdot q,
[\xi, \eta] \cdot q\rangle.$$
But on the other hand, since $Y \in \mathfrak m_1 \subset \mathfrak
m$, one must have that
$$\langle[\xi, Y], \eta\rangle_q = -\langle Y \cdot q, [\xi, \eta]
\cdot q\rangle.$$
Then, either $\frac{1}{2(1 + a)} = 1$, or $ \mathfrak m_1$ is
perpendicular to $[\mathfrak m_2 , \mathfrak m_2]_{\mathfrak m}$.  In
the first case
$$1 = \frac{1}{2(1 + \frac{1}{2}(\lambda -1 ))}  = \frac{1}{1 +
  \lambda}.$$
A contradiction, since $\lambda >0$.  In the second case the
distribution $(\mathfrak s_i)^\bot$ is integrable and hence totally
geodesic.  Also a contradiction (see Remark \ref{s_i-bot-integrable}).

Then there does not exist a transvection at $q$ which is perpendicular
at $q$ to the vectors fixed by the isotropy. This proves Theorem \ref
{main1}.

\begin{nota}\label{s_i-bot-integrable}
We denote by $\tilde\xi$ the Killing field on $M$ induced by $\xi$,
which is given by $\tilde\xi_x = \xi \cdot x$ (and the same for $\eta$).
We have that $[\xi, \eta]_{\mathfrak m} \in \mathfrak m_2$, and we
want to see that $[\tilde\xi, \tilde\eta]$ lies in the distribution
$(\mathfrak s_i)^\bot$.  Recall that if $q' = g \cdot q$ and $\bar\xi,
\bar\eta$ are the right-invariant fields on $G$ with $\bar\xi_e =
\xi_e$ and $\bar\eta_e = \eta_e$, then $\tilde\xi_{q'} =
d\pi(\bar\xi_g)$, $\tilde\eta_{q'} = d\pi(\bar \eta_g)$ and
$[\tilde\xi, \tilde\eta]_{q'} = d\pi([\bar\xi, \bar\eta]_g)$, since
$[\bar\xi, \bar\eta]$ is right-invariant (here $\pi: G \to M$ is the 
projection map).

On the other hand, we may identify $T_{q'}M$ with $\Ad(g^{-1}\mathfrak
m)$ via $Z \mapsto Z \cdot q'$. Note that the isotropy at $q'$ is
$\Ad(g^{-1})\mathfrak h$ and $(\mathfrak s_i)^\bot_{q'} =
(\Ad(g^{-1})\mathfrak m_2) \cdot q'$.  So,
\begin{align*}
[\tilde\xi, \tilde\eta]_{q'} & = d\pi([\bar\xi, \bar\eta]_g) =
d\pi(dr_g[\bar\xi, \bar\eta]_e) \\
& = -d\pi(dr_g[\xi, \eta]_e) = -d\pi(dl_g\Ad(g^{-1})[\xi, \eta]_e) \\
& = -dl_gd\pi(\Ad(g^{-1})[\xi, \eta]_e) \in (\mathfrak s_i)^\bot_{q'},
\end{align*}
where we denote by $l_g$ the left-multiplication in $G$ and $M$.
Therefore, $(\mathfrak s_i)^\bot$ is integrable and autoparallel.
\end{nota}

\begin{nota}\label{always-perpendicular}
Let $\xi \in \mathfrak m$ such that the Killing field $\tilde\xi$
defined by $\tilde\xi_x = \xi \cdot x$ is perpendicular to $L(q)$ at $q$.
Recall that $T_xL(q) = \mathfrak s_x$ for all $x \in L(q)$.  Let
$\mathfrak m_0 \subset \mathfrak m$ be the subspace such that
$\mathfrak s_q = \mathfrak m_0 \cdot q$.  Since the distribution of
symmetry is $G$-invariant, we have that $\mathfrak s_{q'} =
(\Ad(\mathfrak m_0)) \cdot q'$, where $g \in G$ and $q' = g \cdot q$.  Now,
$\tilde\xi_{q'} = (\Ad(g)\xi) \cdot q'$ is perpendicular to $\mathfrak
s_{q'}$.  Since $g$ is arbitrary, we conclude that if $\tilde\xi$ is
perpendicular to $L(q)$ at $q$, it is always perpendicular to $L(q)$.
\end{nota}

\begin{ejemplo}[Stiefel manifolds]\label{Stiefel} 
Let us consider the Stiefel manifold $M = \SO(n + k)/\SO(n)$, with $n,
k \ge 2$, endowed with the normal homogeneous metric.  It is well
known, as it follows from the Serre long exact sequence of homotopies,
that $M$ is simply connected. Moreover, $M$ is an irreducible
Riemannian manifold (see Remark \ref{irred}).  Then $M$ has index of
symmetry $i_{\mathfrak s}(M) = \frac{1}{2}k(k - 1)$.  Moreover, the
(connected) set of fixed points of $\SO(n)$ in $M$, which contains the
orthogonal $k$-frame $B = (e_1, \ldots, e_k)$, is isomorphic to
$\SO(k)$.  Here we consider the standard inclusions $\SO(n) \simeq  
\left(\begin{smallmatrix}
I_k & 0 \\
0 & \SO(n)
\end{smallmatrix}\right)$
and $\SO(k) \simeq 
\left(\begin{smallmatrix}
\SO(k) & 0 \\
0 & I_n
\end{smallmatrix}\right)$
inside $\SO(n + k)$.  Thus, the leaf of symmetry $L(B)$ is the
symmetric space of the group type $\SO(k) \simeq (\SO(k) \times
\SO(k))/\SO(k)$.
\end{ejemplo}

\begin{nota}\label{irred} 
Let $M = G/H$ be a simply connected naturally reductive Riemannian
manifold, where $G$ is the group of transvections of the canonical
connection.  If $M$ is a normal homogeneous space, $G$ must always
coincides with the group of transvections, provided $G$ acts
effectively.  Let $M = M_0 \times M_1 \times \cdots \times M_r$ be the
de Rham decomposition ($M_0$ is, eventually, trivial).  Then  $G = G_0
\times G_1 \times \cdots \times G_r$ and $H = H_0 \times H_1 \times
\cdots \times H_r$, where $H_i \subset G_i$ and $M_i = G_i/H_i$ for
all $i = 0, 1, \ldots, r$.  It is not easy to find this decomposition
through the mathematical literature.  We found such a fact in
\cite{EM}, but without proof.  So, we next give the argument for such
a decomposition. 

Let $p = (p_0, \ldots, p_r) = eH \in G/H $ and let $\mathfrak g =
\mathfrak m \oplus \mathfrak h$ be the naturally reductive
decomposition associated to the naturally reductive metric on $M$.
Observe, as it is well known, since $G$ is the group of transvections,
that $\mathfrak g = \mathfrak m + [\mathfrak m, \mathfrak m]$.

Identifying $T_pM \simeq \mathfrak m$, let $\mathfrak m = \mathfrak
m_0 \oplus \mathfrak m_1 \oplus \cdots \oplus \mathfrak m_r$, where
$\mathfrak m_i = T_{p_i}M_i$ and $T_pM = T_{p_0}M_0 \oplus T_{p_1}M_1
\oplus \cdots \oplus T_{p_r}M_r$.

If $Z \in \mathfrak g$, let $\hat Z$ be its associated Killing field
$q \mapsto Z \cdot q$. It is well known that $(\nabla\hat Z)_p$ lies
in the normalizer of the holonomy algebra at $p$.  So, $(\nabla\hat
Z)_p$ leaves invariant the tangent space at $p$ of any de Rham
factor. Namely, for all $i = 0, 1, \ldots, r$,
\begin{equation}\label{EM*}
(\nabla_{\mathfrak m_i}\hat Z)_p \subset \mathfrak m_i. 
\end{equation}

If $X \in \mathfrak m$, then
\begin{equation}\label{EM**}
(\nabla\hat X)_p = D_X,
\end{equation}
where $D = \nabla - \nabla^c$ is the difference tensor between the
Levi-Civita connection and the canonical connection.  Moreover,
$\langle D_XY, Z\rangle$ is a $3$-form.  From \ref{EM*} one has that,
if $i \neq j$,
$$D_{\mathfrak m_i}\mathfrak m_j = \{0\}.$$

Let now $X_i \in \mathfrak m_i$, then $dl_{\Exp(tX_i)}$ gives the
$\nabla^c$-parallel transport along the geodesic $\Exp(tX_i) \cdot p
\in M_i$.  If $u \in (\mathfrak m_i)^\bot$, then from \ref{EM**},
$dl_{\Exp(tX_i)}u$ is also parallel with respect to the Levi-Civita
connection, along $\Exp(tX_i) \cdot p$.

This implies that $l_{\Exp(tX_i)}$ acts trivially on any other de Rham
factor $M_j$, $j \neq i$.

Then, if we define $G_i$ to be the Lie subgroup of $G$ whose Lie
algebra is generated by $\mathfrak m_i$ and $H_i = (G_i)_p$ we obtain
the desired decomposition.
\end{nota}

\section{The naturally reductive case}

In this section we assume that $M = G/H$ is a compact naturally
reductive space, with reductive decomposition $\mathfrak g = \mathfrak
h \oplus \mathfrak m$.  We also assume that $M$ is locally irreducible
and non-locally symmetric, and the presentation $G/H$ is given by the
transvection group of the canonical connection of $M$.  That is to
say, $\mathfrak g = \mathfrak m + [\mathfrak m, \mathfrak m]$ (which
is not, in general, a direct sum).  From a well known result due to
Kostant \cite{K}, there exists an $\Ad(G)$-invariant non-degenerate
bilinear form $Q$ on $\mathfrak g$ such that
$$Q(\mathfrak h, \mathfrak m) = 0, \qquad Q|_{\mathfrak m} =
\langle\cdot, \cdot\rangle,$$
where we denote by $\langle\cdot, \cdot\rangle$ the Riemannian metric
at $T_{eH}M \simeq \mathfrak m$.  In particular, it follows that $Q$
is non-degenerate when restricted to $\mathfrak h$.

We keep the notation from the previous section. More precisely, if
$L(q)$ is the leaf of symmetry by $q \in M$, the de Rham decomposition
of $L(q)$ is given by
$$L(q) = L_0(q) \times L_1(q) \times \cdots \times L_s(q),$$
and the leaf of fixed point of the isotropy $\Sigma(q)$ by $q$ is a
local factor of $L(q)$.  That is, there exist a subset $J \subset \{0,
1, \ldots, s\}$, which must contain $0$, such that 
$$\Sigma(q) = L_J(q) = \prod_{j \in J}L_j(q).$$

If $i \notin J$, we proved that $L_i(q)$ is a symmetric space of the
group type.  Moreover, the ideal $\hat{\mathfrak g}_i \subset
\mathfrak g$, which consist of the Killing fields which are  always
tangent to the foliation with leaves $L_i(q)$, $x \in M$, is a simple
ideal of the Lie algebra of geometric transvections $\mathfrak g_i^q = \mathfrak
k_i^q \oplus \mathfrak p_i^q$.  In particular, there exists an ideal
$\mathfrak k$ contained in the $Q$-orthogonal ideal to $\hat{\mathfrak
  g}_i$, which turns out isomorphic to $\hat{\mathfrak g}_i$, such
that $\mathfrak k_i^q \simeq \diag(\hat{\mathfrak g}_i \oplus
\hat{\mathfrak g}_i)$.

Recall that $Q|_{\mathfrak g_i^q}$ has the form $\lambda B \oplus \mu
B$, where $B$ is the negative of the Killing form of $\hat{\mathfrak
  g}_i$ and we identify $\mathfrak g_i^q \simeq \hat{\mathfrak g}_i
\oplus \hat{\mathfrak g}_i$.  We have that $\lambda$ and $\mu$ are
both nonzero, since $Q|_{\mathfrak h}$ is non-degenerate.

If $Q$ is not positive defined, then $\lambda, \mu$ cannot have both
the same sign.  If $\lambda > 0$, we get a contradiction with the same
argument that we use for the normal homogeneous case (because, we only
use that $\mu \neq 0$).  Therefore, it only remains the case $\lambda
< 0 < \mu$.  By rescaling the Riemannian metric on $M$, we may assume
that
$$Q|_{\mathfrak g_i^q} = -\lambda B \oplus B, \qquad \text{for some
  $\lambda > 0$}.$$
Recall that $\lambda \neq 1$, since $Q$ is non-degenerate on the
isotropy $\mathfrak h$.  Let, $\mathfrak m_1 \subset \mathfrak m$
the subspace such that $\mathfrak m_1 \cdot q = T_q(L_i(q))$. So,
with our identifications, we have that
$$\mathfrak k_i^q = \{(v,v):  v \in \hat{\mathfrak g}_i\}, \qquad
\mathfrak p_i^q = \left\{\left(\frac{1}{2}v, -\frac{1}{2}v\right): v
\in \hat{\mathfrak g}_i\right\}$$
and
$$\mathfrak m_1  = \left\{\left(\frac{1}{1 - \lambda}v,
\frac{\lambda}{1 - \lambda}v\right): v \in \hat{\mathfrak
  g}_i\right\}.$$

In this case, we have that if $a = -\frac{1}{2}(\lambda + 1)$, then
$$\left(\frac{1}{2}v, -\frac{1}{2}v\right) + (av, 0) \in \mathfrak m_1$$
for all $v \in \hat{\mathfrak g}_i$. Just by following the
calculations from the last part of the previous section we get the
contradiction
$$1 = \frac{1}{2(a + 1)} = \frac{1}{1 - \lambda}.$$

This proves Theorem \ref {main2}.

\section{Examples with distribution of symmetry not of a group-type}

For a compact (simply connected) naturally reductive space $M = G/H$
the leaves of the symmetric foliation, are always globally symmetric
spaces of group type.  In fact, if $G$ is the group of transvections
of the canonical connection then the symmetric submanifold $S$ by $p =
[e]$ coincides with the connected component of the fixed points of $H$
in $M$.

The full isometry group is given by (see \cite {R, OR}) 
 $$I_0 (M) = G_\mathrm{ss} \times \tilde S,$$
where $\tilde S$ is the group of isometries which corresponds to the
$G$-invariant vector fields of $M$, and $G_\mathrm{ss}$ is the
semisimple part of the reductive group $G$.  Moreover, the leaves of
the symmetric foliation are given by the orbits on $M$ of the group
$\tilde S$.  This implies that the holonomy group $\Phi$ of a fixed
leaf $S$, of the symmetric foliation, commutes with $\tilde S$.  It is
not hard  to see that $\Phi$ must be isomorphic to $\tilde S$ and that
$I_0(S) =  \Phi \times S \simeq S\times S$.  

In the next we will construct examples of compact (simply connected)
homogeneous spaces whose foliation of symmetry is not of group type.  

Let $G$ be a compact Lie group and let $G', K'$ be compact subgroups
such that  $G\supset G' \supset K'$. Assume, furthermore, that $G'$ is
simple and that $(G',K')$ is a symmetric pair.  Observe that $(G',K')$
is not of group-type, since $G'$ is simple.  

Let $(\cdot, \cdot)$ be an $\Ad(G)$-invariant inner product in the Lie
algebra $\mathfrak {g}$ of $G$. Let 
$$\mathfrak {g}' = \mathfrak {k}'\oplus \mathfrak {p}'$$
be the Cartan decomposition associated to $(G',K')$.  Since $G'$ is
simple, then the restriction of  $(\cdot, \cdot)$ to $\mathfrak{g}'$
is a multiple of the Killing form.  So, $\mathfrak{k}'$ must be
perpendicular to $\mathfrak {p}'$ with respect to $\langle\cdot,
\cdot\rangle$ (since both subspaces are perpendicular with respect to
the Killing form of  $\mathfrak {g}'$).  This is in general false if
$G'$ is not simple (e.g., if $(G',K')$ is of group type). 

Let now 
$$\mathfrak {m} := (\mathfrak {k}')^\perp$$
be the orthogonal complement in $\mathfrak{g}$ with respect to $
(\cdot, \cdot)$.   

As previously observed, $\mathfrak{p}' \subset \mathfrak{m}$. So, if
$\mathfrak{m}' = (\mathfrak{p}')^\perp \cap \mathfrak{m}$, then
$$\mathfrak{m} = \mathfrak{m}' \oplus \mathfrak{p}' \qquad
\text{(orthogonally)}.$$

Let $\langle\cdot, \cdot\rangle$ be the inner product on
$\mathfrak{m}$ defined as follows by the following properties: 

\begin{enumerate} 
\item[{\it a})] $\langle \mathfrak{k}', \mathfrak{p}'\rangle = 0$;
\item[{\it b})] the restrictions to $\mathfrak{m}'$ of $(\cdot,
  \cdot)$ and $\langle\cdot, \cdot\rangle$ coincide; 
\item[{\it c})] $\langle\cdot, \cdot\rangle  = 2(\cdot, \cdot)$,
  restricted to the subspace $\mathfrak{p}'$. 
\end{enumerate}

Let $M : = G/K'$ endowed with the $G$-invariant Riemannian metric that
at $p = [e]$ coincides  with  the inner product $\langle\cdot,
\cdot\rangle$ of $T_pM \simeq \mathfrak{m}$.  We will also denote by
$\langle\cdot, \cdot\rangle$ this Riemannian metric.  The associated
Levi-Civita connection will be denoted by~$\nabla$.

\begin{notation}
If $X \in \mathfrak{g}$ then $\tilde X$ denotes the Killing field of
$M$ induced by $X$. Namely, $\tilde X(q) = \frac{d}{dt}\big|_0\Exp(tX)
\cdot q$.
\end{notation}

Recall, the well known fact that 
$$[\tilde X, \tilde Y] = -\widetilde{[X,Y]}.$$

\begin{lema}\label{nabla}
If $X \in \mathfrak{p}'$ then $(\nabla\tilde X)_p = 0$.  
\end {lema}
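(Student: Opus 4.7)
The plan is to use the Koszul formula for Killing fields (formula (\ref{koszul})) applied at $p=[e]$, together with the specific structural properties of the decomposition $\mathfrak{g} = \mathfrak{k}' \oplus \mathfrak{m}' \oplus \mathfrak{p}'$. First, I would verify the key auxiliary containment $[\mathfrak{g}', \mathfrak{m}'] \subset \mathfrak{m}'$: since $\mathfrak{m}' = (\mathfrak{g}')^\perp$ in $\mathfrak{g}$ with respect to the $\Ad(G)$-invariant form $(\cdot,\cdot)$, for $U \in \mathfrak{g}'$, $V \in \mathfrak{m}'$, $W \in \mathfrak{g}'$ we have $([U,V],W) = -(V,[U,W]) = 0$. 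In particular $[\mathfrak{p}', \mathfrak{m}'] \subset \mathfrak{m}'$. Combined with the symmetric-pair relation $[\mathfrak{p}', \mathfrak{p}'] \subset \mathfrak{k}'$, this severely restricts which bracket components survive.

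Next, I would fix $X \in \mathfrak{p}'$ and take arbitrary $Y, Z \in \mathfrak{m}$, decomposed as $Y = Y_1 + Y_2$, $Z = Z_1 + Z_2$ with $Y_i, Z_i$ in $\mathfrak{m}', \mathfrak{p}'$ respectively. The Koszul formula at $p$ (using $[\tilde A,\tilde B](p) = -[A,B]_{\mathfrak{m}}$) gives
\begin{equation*}
2\langle \nabla_{\tilde Y}\tilde X, \tilde Z\rangle_p = -\langle [Y,X]_{\mathfrak{m}}, Z\rangle - \langle [Y,Z]_{\mathfrak{m}}, X\rangle - \langle [X,Z]_{\mathfrak{m}}, Y\rangle.
\end{equation*}
Using the two structural facts, $[Y_2,X] \in \mathfrak{k}'$ and $[Y_1,X] \in \mathfrak{m}'$, so the first term collapses to $-\langle [Y_1,X], Z_1\rangle = -([Y_1,X],Z_1)$ (since $\langle\cdot,\cdot\rangle = (\cdot,\cdot)$ on $\mathfrak{m}'$ and $\mathfrak{m}'\perp\mathfrak{p}'$). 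The third term similarly collapses to $-([X,Z_1],Y_1)$. For the middle term, only the $\mathfrak{p}'$-projection of $[Y,Z]$ pairs with $X$; after eliminating $[Y_2,Z_2]_{\mathfrak{p}'}=0$ and $[Y_1,Z_2],[Y_2,Z_1] \in \mathfrak{m}'$, only $[Y_1,Z_1]_{\mathfrak{p}'}$ survives, and the scaling property c) of the metric supplies a factor $2$: $-\langle [Y_1,Z_1]_{\mathfrak{p}'}, X\rangle = -2([Y_1,Z_1],X)$.

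Finally, $\Ad(G)$-invariance of $(\cdot,\cdot)$ yields $([X,Z_1],Y_1) = ([Y_1,X],Z_1)$ and $([Y_1,Z_1],X) = -([Y_1,X],Z_1)$. Adding the three simplified terms gives
\begin{equation*}
-([Y_1,X],Z_1) - ([Y_1,X],Z_1) + 2([Y_1,X],Z_1) = 0.
\end{equation*}
Hence $\langle (\nabla_{\tilde Y}\tilde X)_p, \tilde Z(p)\rangle = 0$ for all $Y,Z \in \mathfrak{m}$, so $(\nabla\tilde X)_p = 0$.

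The only subtle point is the bookkeeping of which pieces of each bracket land in $\mathfrak{k}'$, $\mathfrak{m}'$, or $\mathfrak{p}'$; the factor $2$ in property c) is precisely what is needed to cancel the contribution of the middle term against the other two, so the result depends crucially on the choice of scaling (any other scalar would leave a nonzero residue, matching the geometric intuition that $\mathfrak{p}'$ produces geometric transvections only for this particular deformation of the normal metric).
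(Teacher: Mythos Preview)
Your proof is correct. The computation via the Koszul formula for Killing fields, together with the bracket relations $[\mathfrak{p}',\mathfrak{p}']\subset\mathfrak{k}'$ and $[\mathfrak{p}',\mathfrak{m}']\subset\mathfrak{m}'$ and the factor $2$ from property c), goes through exactly as you wrote.

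The paper's proof uses the same Koszul identity but organizes the argument differently. Instead of treating all $Y,Z\in\mathfrak{m}$ at once, it first proves geometrically that $G'/K'$ is a totally geodesic submanifold of $G/K'$: for $\xi\in\mathfrak{m}'$ the Killing field $\tilde\xi$ is everywhere perpendicular to $G'/K'$, and since $\langle A_{\tilde\xi}U,V\rangle=-\langle\nabla_U\tilde\xi,V\rangle$ is both symmetric and skew, the shape operator vanishes. From this and the fact that $\tilde X|_{G'/K'}$ is a transvection at $p$ of the symmetric space $G'/K'$, the cases $Y\in\mathfrak{p}'$ and the mixed $\mathfrak{m}'\times\mathfrak{p}'$ case are handled. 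Only the remaining $\mathfrak{m}'\times\mathfrak{m}'$ case is done by the Koszul computation, and there the paper's calculation is identical to yours restricted to $Y_2=Z_2=0$.

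Your unified algebraic route is shorter and avoids the geometric detour. The paper's route has the side benefit that it isolates, as an independent statement, the fact that $G'/K'\subset G/K'$ is totally geodesic; the paper refers back to this a few lines after the lemma when asserting that the leaves of $\tilde{\mathfrak{p}}'$ are totally geodesic. Your argument also implies this (since $(\nabla\tilde X)_p=0$ for all $X\in\mathfrak{p}'$ forces the distribution to be autoparallel at $p$, hence everywhere by $G$-invariance), but you would have to state that consequence separately.
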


\begin{proof}
Let us first show that $G'/K'$ is a totally geodesic submanifold of
$G/K'$. Observe that $\mathfrak{m}'$ is the orthogonal complement,
with respect to $(\cdot, \cdot)$, of $\mathfrak{g}'$ in
$\mathfrak{g}$.  Then $\mathfrak{m}'$ must be $\Ad(G')$-invariant,
since $\mathfrak{g}'$ is so. 

Let $\xi \in \mathfrak{m}'$ and $g' \in G'$.  Since $\mathfrak{g}'$ is
$G'$-invariant,  
$$(\tilde\xi)_{g'K'} = dl_{g'}\Ad((g')^{-1})\xi.$$

Now observe that $\Ad((g')^{-1})\xi$ belongs to $\mathfrak{m}'$ and so
it is perpendicular to $G'/K'$ at $eK'$.  Since $l_{g'}$ is an
isometry that preserves $G'/K'$ one concludes that $(\tilde\xi)_{g'K'}$
is perpendicular to $G'/K'$ at $g'K'$.  Therefore, the Killing field
$\tilde\xi$, restricted to $G'/K'$ is always perpendicular to this
submanifold.  If $A$ is the shape operator of $G'/K'$ then, for any 
$U,V$ vector fields on  $G'/K'$, 
$$\langle A_{\tilde\xi} U, V\rangle = -\langle\nabla_U\tilde\xi,
V\rangle.$$ 
But the left hand of the above equality is symmetric on $U, V$ and the
right hand is skew (by the Killing equation).  Then  
$$A_{\tilde\xi} = 0 = \langle\nabla_U\tilde\xi, V\rangle.$$  
Since $\xi$ is any arbitrary normal direction at $p$, we have that
$G'/K'$ is a totally geodesic submanifold of $M$.  

Since $X \in \mathfrak{p}'$ then $\tilde X|_{G'/K'}$ is parallel at
$p$, regarded as a Killing field of $G'/K'$. Then, since $G'/K'$ is
totally geodesic,  
\begin{equation}\label{(*)}
\nabla_Y\tilde X = 0,
\end{equation}
for all $Y \in \mathfrak{p}' \simeq T_p(G'/K')$.  Observe, from last
equality and the Killing equation, that  
\begin{equation}\label{(**)}
\langle\nabla_\xi\tilde X, Y\rangle = 0,
\end{equation}
for any  $\xi \in \mathfrak{m}',\, Y \in \mathfrak{p}'$.

Let $\xi, \eta \in \mathfrak{m}'$. Then, from equation \ref{koszul}, 
\begin{align}
2\langle \nabla_\xi\tilde X, \eta\rangle & = \langle[\tilde\xi ,
  \tilde X]_p, \eta\rangle + \langle[\tilde\xi, \tilde\eta]_p,
X\rangle + \langle[\tilde X, \tilde\eta]_p, \xi\rangle \notag \\
&  = -\langle[\xi, X], \eta\rangle - \langle[\xi, \eta], X \rangle -
\langle[X, \eta], \xi\rangle \notag \\ 
& = -([\xi, X], \eta) - 2([\xi, \eta], X) - ([X, \eta], \xi) \notag \\
& = (X, [\xi, \eta]) - 2([\xi, \eta], X) + (X, [\xi, \eta]) =
0, \label{(***)}
\end{align}
where $(\cdot, \cdot)$ is the $\Ad(G)$-invariant inner product of
$\mathfrak{g}$.  

From \ref{(*)}, \ref{(**)} and \ref{(***)} one has that $(\nabla\tilde
X)_p = 0$. 
\end{proof}

Let $\tilde{\mathfrak{p}}'$ denote the $G$-invariant distribution on
$M$ with $\tilde{\mathfrak{p}}'_p =  \mathfrak{p}'$.  Then the
distribution $\tilde{\mathfrak{p}}'$ is integrable with totally
geodesic leaves.  In fact, if $g \in G$, then the leaf by $g \cdot p$ of
$\tilde{\mathfrak{p}}'$ is $g \cdot (G'/K')$ (see the beginning of the
proof of Lemma \ref{nabla}, where it is proved that $G'/K'$ is a
totally geodesic submanifold of $M$).  

\begin{lema}\label{examples} 
We keep the notation and general assumptions of this section.  Assume,
furthermore, that $(G,G')$ is  an  irreducible (almost effective)
symmetric pair and that $(G/K', \langle\cdot, \cdot\rangle)$ is not a
symmetric space.  Then the distribution of symmetry of $(G/K',
\langle\cdot, \cdot\rangle)$ is exactly $\mathfrak s =
\tilde{\mathfrak {p}}'$ (whose  integral manifolds are $g \cdot (G'/K')$,
$g\in G$).
\end{lema}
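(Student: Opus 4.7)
The plan is to prove both inclusions. The inclusion $\tilde{\mathfrak{p}}' \subseteq \mathfrak{s}$ is immediate from Lemma \ref{nabla}, which provides, for each $X \in \mathfrak{p}'$, a Killing field $\tilde X$ parallel at $p$; hence $\mathfrak{p}' \subseteq \mathfrak{s}_p$, and the $G$-invariance of both distributions propagates this to all of $M$.

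For the reverse inclusion I would argue by contradiction. Since $\mathfrak{s}_p$ is $\Ad(K')$-invariant and contains $\mathfrak{p}'$, we may write $\mathfrak{s}_p = \mathfrak{p}' \oplus \mathcal{D}_p$ for some $\Ad(K')$-invariant $\mathcal{D}_p \subseteq \mathfrak{m}'$, and assume for contradiction that $\mathcal{D}_p \neq \{0\}$. Pick $w \in \mathcal{D}_p \setminus \{0\}$ and a Killing field $Z$ on $M$ with $Z(p) = w$ and $(\nabla Z)_p = 0$. Write $Z = \tilde W + Y$ where $W \in \mathfrak{m}'$ is the unique element with $\tilde W(p) = w$ and $Y$ is a Killing field that vanishes at $p$ (hence lies in the isotropy algebra of $I_0(M)$ at $p$). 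A direct Koszul calculation using the explicit form of the metric ($\langle\cdot,\cdot\rangle = 2(\cdot,\cdot)$ on $\mathfrak{p}'$, $\langle\cdot,\cdot\rangle = (\cdot,\cdot)$ on $\mathfrak{m}'$, with $\mathfrak{p}' \perp \mathfrak{m}'$) yields the asymmetric block formulas
\[
(\nabla \tilde W)_p v = [W, v] \text{ for } v \in \mathfrak{p}', \qquad (\nabla \tilde W)_p u = \tfrac{1}{2}[W, u]_{\mathfrak{p}'} \text{ for } u \in \mathfrak{m}',
\]
in which the factor $\tfrac{1}{2}$ in the $\mathfrak{m}' \to \mathfrak{p}'$ block is a direct trace of the scaling factor $2$ in $\langle\cdot,\cdot\rangle|_{\mathfrak{p}'}$. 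Thus $(\nabla Y)_p = -(\nabla \tilde W)_p$ is an off-diagonal skew endomorphism of $\mathfrak{m}$ whose two blocks differ by a factor of $2$.

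The final step is to apply Proposition \ref{bracket formula} to $Z$ together with Killing fields $\tilde\xi_0, \tilde\eta_0$ (for $\xi_0, \eta_0 \in \mathfrak{m}$) that are perpendicular at $p$ to the leaf of symmetry $L(p)$, and then to combine the resulting identity with the Koszul formula for $Z = \tilde W + Y$, following the template of the concluding step of the proof of Theorem \ref{main2}. This should produce a numerical inconsistency whose source is precisely the built-in asymmetric scaling between the two blocks of $(\nabla \tilde W)_p$. The alternative reading of the contradiction, as in Remark \ref{s_i-bot-integrable}, is that $\mathfrak{s}^{\perp}$ would have to be autoparallel, whence $M$ would split, contradicting the hypothesis that $(G/K', \langle\cdot,\cdot\rangle)$ is not symmetric.

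The main obstacle is carrying out this last step in our non-naturally-reductive setting: Kostant's $\Ad(G)$-invariant extension of the metric from $\mathfrak{m}$ to $\mathfrak{g}$ does not exist here (the bi-invariant form $(\cdot,\cdot)$ and the Riemannian metric $\langle\cdot,\cdot\rangle$ differ by the factor $2$ on $\mathfrak{p}'$), so the numerical inconsistency must be extracted by hand, carefully tracking the scaling discrepancy through all the bracket identities. At the same time, one must rule out that the isotropy summand $Y$ dissolves the off-diagonal contribution of $(\nabla \tilde W)_p$; this is where the irreducibility of the symmetric pair $(G,G')$ and its almost effectiveness come in, via the consequence $[\mathfrak{p}', \mathfrak{m}'] = \mathfrak{m}'$ (obtained from the $\Ad(G')$-irreducibility of $\mathfrak{m}'$ together with the nontriviality of the symmetric pair $(G',K')$), which guarantees that the off-diagonal block of $(\nabla \tilde W)_p$ is nondegenerate on $\mathcal{D}_p$.
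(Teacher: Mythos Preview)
Your first inclusion is correct and matches the paper. For the reverse inclusion, however, you are working far too hard, and by your own admission the argument is incomplete: you write that the final step ``should produce a numerical inconsistency'' and flag carrying it out as ``the main obstacle'', without actually doing so. Replicating the machinery of Theorems~\ref{main1} and~\ref{main2} (Proposition~\ref{bracket formula}, block computations of $(\nabla\tilde W)_p$, an analysis of the isotropy piece $Y$, and the bracket identity $[\mathfrak{p}',\mathfrak{m}']=\mathfrak{m}'$) is neither needed nor, as you note, straightforward in this non-naturally-reductive setting.

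The paper's proof of the reverse inclusion is a two-line argument that you have overlooked. Since $\tilde{\mathfrak{p}}' \subset \mathfrak{s}$ and the leaves of $\tilde{\mathfrak{p}}'$ are precisely the fibers of the projection $G/K' \to G/G'$, the $G$-invariant distribution $\mathfrak{s}$ descends to a $G$-invariant distribution $\bar{\mathfrak{s}}$ on the symmetric space $G/G'$. Because $(G,G')$ is an irreducible symmetric pair, the only $G$-invariant distributions on $G/G'$ are $0$ and the full tangent bundle. In the first case $\mathfrak{s}=\tilde{\mathfrak{p}}'$ and we are done; in the second $\mathfrak{s}=TM$, so $M$ is symmetric, contrary to hypothesis. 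This is exactly where the irreducibility of $(G,G')$ is used, not via bracket identities on $\mathfrak{m}'$.
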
 

\begin{proof} 
We have seen that $\tilde{\mathfrak {p}}' \subset \mathfrak s$.  Then
the distribution $\mathfrak s$ of $M= G/K'$ descends to a distribution
$\bar{\mathfrak s}$ of the symmetric space $\bar M = G/G'$. Moreover,
$\bar{\mathfrak s}$ is $G$-invariant, since $\mathfrak s$ is so
(because isometries preserve the distribution of symmetry).  Then
$\bar{\mathfrak s} = 0$, or $\bar{\mathfrak s} = T\bar M$.  In the
first case we have that $\mathfrak s = \tilde{\mathfrak{p}}'$, as we
wanted to prove.  In the second case we obtain that $\mathfrak s = TM$
and hence $M$ would be symmetric.  
\end{proof}

We have not found a general argument for deciding when $(G/K',
\langle\cdot, \cdot\rangle)$ is not locally symmetric.

\subsection{Explicit examples}

We keep the notation and assumptions of this section.  

\begin{ejemplo}[The unit tangent bundle of the sphere of curvature
    $2$]\label{unit tangent bundle}
Set, for $n
\geq 1$,
$$G = \SO(n + 1), \qquad G'= \SO(n), \qquad  K'= \SO(n - 1).$$ 
Let $M = G/K'$ endowed with the $\SO(n + 1)$-invariant metric
$\langle\cdot, \cdot\rangle$.  Though, for $n = 4$, $\SO(n)$ is not
simple, the restriction of the Killing form of $\SO(5)$ to $\SO(4)$
turns out to be a multiple of the Killing form (i.e., the same
multiple on each irreducible factor).  Then the general results of
this section applies also for this case.  We have proved that the
(autoparallel) $\SO(n + 1)$-invariant distribution
$\tilde{\mathfrak{p}}'$ is contained in the symmetric distribution
$\mathfrak s$ of $M$. 

We will prove the equality, or equivalently, from Lemma
\ref{examples}, that
$$(\SO(n + 2)/\SO(n), \langle\cdot, \cdot\rangle)$$
is not a locally symmetric space.  

It is not difficult to see that, up to rescaling, $(\SO(n + 2)/\SO(n),
\langle\cdot, \cdot\rangle)$ is the unit tangent bundle of the sphere
of curvature $2$ (with the Sasaki metric).  From the following remark
we obtain that this space is not locally symmetric.
\end{ejemplo}

\begin{nota}\label{not symmetric}
The unit tangent bundle $M^{2n - 1}$ to the sphere of curvature $2$ is
never locally symmetric.  In fact, assume $n = 2$. Then $\SO(3)$ acts
simply transitively on $M^3$.  The universal cover $\tilde M^3$ of
$M^3$ is compact and diffeomorphic to $\Spin(3)$.  If $\tilde M^3$ is
symmetric then it must be irreducible.  Moreover, $\tilde M^3$ must be
of rank one.  In fact the isotropy representation is an irreducible
and polar representation acting in a $3$-dimensional space (which
implies that is transitive on the unit sphere).  Then all geodesics in
$\tilde M^3$ have are closed of the same length.  Then all geodesics
of $M^3$ would admit a common period.  But any  horizontal geodesic of
$M^3$ has length $\sqrt 2 \pi$.  But a vertical geodesic has length
$2\pi$, which is not rationally related to the length of any
horizontal geodesic.  Then $M^3$ is not locally symmetric.  Now
observe that $M^3$ is in a canonical way a totally  geodesic
submanifold of $M^{2n - 1}$.  Then $M^{2n - 1}$ is not locally
symmetric. Otherwise, $M^3$ would be locally symmetric.

Observe that the unit tangent bundle to the sphere of dimension $2$
and curvature $1$ is a Lie group with a bi-invariant metric and hence
a symmetric space.  
\end {nota}

\begin{nota}
Notice the difference between the above example and taking $k = 2$ in
Example \ref{Stiefel}.  In both cases the underlying differentiable
manifold is the Stiefel manifold $\SO(n + 2)/\SO(n)$, but in Example
\ref{unit tangent bundle} the metric is not normal homogeneous.
Recall that in Example \ref{Stiefel}, with the normal homogeneous
metric, the leaf of symmetry is the circle $S^1$ (and hence, it is of
the group type).
\end{nota}

\begin{ejemplo}
Let us consider the standard inclusions $\SU(3) \supset \SO(3) \supset
\SO(2)$.  Then $M = \SU(3)/\SO(2)$, with the metric above defined has
index of symmetry equal to 2 and leaf of symmetry the sphere $S^2$.
In fact, let us check that $M$ is not a symmetric space.  Recall that
$M$ is the Aloff-Wallach manifold $M = W^7_{1, -1}$, and hence $H^4(M,
\mathbb Z) = 0$, according with \cite[Lemma~3.3]{aloff-wallach-1975}.
Assume that $M$ is a symmetric space, then must be one of the
following: $S^7$, $S^5 \times S^2$, $S^4 \times S^3$, or $S^3 \times
S^2 \times S^2$.  The last two cases are excluded by the restriction on
the cohomology.  It cannot be $M = S^7$, since $\SU(3)$ is not
transitive on $S^7$.  Finally, if $M = S^5 \times S^2$, then,
projecting down to the second factor, we would obtain an isometric 
action of
$\SU(3)$ on $S^2$.  Such an action must be trivial since $\SU(3)$ is
simple and $\dim \SU(3) > \dim \SO(3)$.  So, $\SU(3)$ cannot be
transitive on $M$, which is a contradiction.  With a similar argument,
we can prove that $M$ is an irreducible Riemannian manifold. 
\end{ejemplo}

\begin{ejemplo}[The Wallach $24$-manifold]
Consider $F_4 \supset \Spin(9) \supset \Spin(8)$. Notice that
$F_4/\Spin(9)$ is the Cayley plane.  The manifold $W^{24} =
F_4/\Spin(8)$ is the so-called Wallach manifold and it has leaf of symmetry
$S^8 = \Spin(9)/\Spin(8)$.  In fact, it is well-known that $W^{24}$ is
topologically different from a symmetric space.  Recall that in this
case $W^{24}$ can be endowed with a metric of positive curvature.
\end{ejemplo}

\end{document}